\newcommand{\innerp}[1]{\langle {#1} \rangle}
\newcommand{\sign}{{\rm sign}}
\newcommand{\R}{{\mathbb R}}
\newcommand{\C}{{\mathbb C}}
\renewcommand{\eqref}[1]{(\ref{#1})}
\newcommand{\mhsp}{\hspace{2em}}
\renewcommand{\H}{{\mathbb H}}
\newtheorem{definition}{Definition}[section]
\newtheorem{corollary}{Corollary}[section]
\newtheorem{theorem}{Theorem}[section]
\newtheorem{lemma}{Lemma}[section]
\newtheorem{remark}{Remark}[section]
\newcommand{\zz}{^{\top}}
\date{}
\begin{document}
\bibliographystyle{plain}
\title{The $ \ell_1 $-analysis with redundant dictionary in phase retrieval}

\author{Bing Gao}
\address{LSEC, Inst.~Comp.~Math., Academy of
Mathematics and System Science,  Chinese Academy of Sciences, Beijing, 100091, China}
\email{gaobing@lsec.cc.ac.cn}

\begin{abstract}
This article presents new results concerning the recovery of a signal from magnitude only measurements where the signal is not sparse in an orthonormal basis but in a redundant dictionary. To solve this phaseless problem, we analyze the $ \ell_1 $-analysis model. Firstly we investigate the noiseless case with presenting a null space property of the measurement matrix under which the $ \ell_1 $-analysis model provide an exact recovery. Secondly  we introduce a new property (S-DRIP) of the measurement matrix. By solving the $ \ell_1 $-analysis model, we prove that this property can guarantee a stable recovery of real signals that are nearly sparse in highly overcomplete dictionaries.
\end{abstract}
\maketitle{}
{\bf Keywords} Compressed sensing, Phase retrieval, Sparse recovery,  $ \ell_1 $-analysis
\vspace{0.3cm}

{\bf Mathematics Subject Classification} 94A12
\section{Introduction}
\subsection{Phase Retrieval}
Phase retrieval is the process of recovering signals from phaseless measurements. It is of fundamental importance in numerous ares of applied physics and engineering \cite{JF},\cite{DL}. In general form, phase retrieval problem is to estimate the original signal $ x_0\in\mathbb{H}^n $ ($\mathbb{H}=\C $ or $ \R $) from
\begin{equation}\label{primemodel}
|Ax|=|Ax_0|+e,
\end{equation}
where $A = [a_1,\ldots, a_m]\zz\in\H^{m\times n} $ is the measurement matrix and $ e=[e_1,\cdots, e_m]\in\H^m $ is an error term. While only the magnitude of $ Ax_0 $ is available, it is important to note that the setup naturally leads to ambiguous solutions. For example, if $ \hat{x}\in\H^n $ is a solution to (\ref{primemodel}), then any multiplication of $ \hat{x} $ and a scalar $ c\in\H $ ($ |c|=1 $) is also a solution to (\ref{primemodel}). Hence, these global ambiguities are considered acceptable for this problem. In this paper, we recover the signal $ x_0 $ actually means that we reconstruct $ x_0 $ up to a unimodular constant.
 
It is known that, when $ \H=\R $, at least $ 2n-1 $ measurements are needed to recover a signal $ x\in\R^n$ \cite{RPD}. For the complex case, the minimum number of measurements are proved to be at least $ 4n-4 $ when $ n $ is in the form of $ n=2^k+1, k\in\mathbb{Z_+} $ \cite{CDMC}. However, for a general dimension $ n $, the same question is still open. About the minimum number of observations, more details can be found in \cite{BN}, \cite{YZ}. To reduce the measurement numbers, priori information must be given. The most common priori information is sparsity, which means that only few elements in the target signal $ x_0 $ is nonzero. Here we say a signal is $ k $-sparse if there are at most $ k $ non-zero elements in the signal. In view of sparse signals, phase retrieval is also known as compressive phase retrieval, which have many applications in data acquisition \cite{cpr_J},\cite{cpr_E}. The compressive phase retrieval problem is in fact the magnitude-only compressive sensing problem. For this problem, Wang and Xu explored the minimum number of measurements and extended the null space property in compressed sensing to phase retrieval \cite{YZ}. In \cite{VX14}, Voroniski and Xu gave the definition of \textit{strong restricted isometry property} (Definition \ref{srip}) and then many conclusions in compressed sensing can be extended to compressive phase retrieval, such as instance optimality \cite{BYZ}. The above conclusions hold just for signals which are sparse in the standard coordinate basis. However, there are many examples in which a signal of interest is not sparse in an orthonormal basis but sparse in some transform basis. In resent years, many researchers laid special stress on analysing these dictionary-sparse signals in compressed sensing \cite{CEN10}, \cite{AXA}, \cite{HKP}. However, the phase retrieval literature is lacking on this subject. We will focus on this problem in this paper.
\subsection{The $ \ell_1 $-analysis with redundant dictionary}
At normal state, sparsity is expressed not in terms of an orthonormal basis but in terms of an overcomplete dictionary. That is to say, the signal $ x_0\in\H^n $ can be expressed as $ x_0=Dz $, where $ D\in\H^{n\times N } $ is a frame and $ z\in\H^N $ is a sparse vector. 
In this paper, we use $ D^* $ to represent the adjoint conjugate of $ D $ when $ \H=\C $, while when $ \H = \R $, we use $ D^* $ to represent the transpose of $ D $.

In compressed sensing, to reconstruct the signal $ x_0 $, the most commonly used model is the $ \ell_1 $-analysis model
\begin{equation}\label{l1_com-sen}
\min\|D^*x\|_1\mhsp \mbox{subject to} \mhsp \|Ax-Ax_0\|_2^2\leq\epsilon^2,
\end{equation}
where $ \epsilon $ is the upper bound of the noise.
In \cite{CEN10}, Cand\`{e}s, Eldar, Needell and Randall proved that when $ D $ is a tight frame and $ D^*x_0 $ is almost $ k $-sparse,  the $ \ell_1 $-analysis (\ref{l1_com-sen}) can guarantee a stable recovery provided that the measurement matrix is Gaussian random matrix with $ m=\mathcal{O}(k\log(n/k)) $.  

For the phase retrieval problem, we also analyze the $ \ell_1 $-analysis model
\begin{equation}\label{l1analysis}
\min\|D^*x\|_1\mhsp \mbox{subject to} \mhsp \||Ax|-|Ax_0|\|_2^2\leq\epsilon^2,
\end{equation}
where $ \epsilon $ is the upper bound of the noise level.

In this paper, we aim to explore the conditions under which the $ \ell_1 $-analysis model (\ref{l1analysis}) can generate an accurate or a stable solution to (\ref{primemodel}).
Note that when $ D=I $, this problem is reduced to the traditional phase retrieval and the $ \ell_1 $-analysis model is reduced to 
\begin{equation}\label{l1_ph-re}
\min\|x\|_1\mhsp \mbox{subject to} \mhsp \||Ax|-|Ax_0|\|_2^2\leq\epsilon^2.
\end{equation}
For this case, when $ \H=\R $, Gao, Wang and Xu provided a detailed analysis of (\ref{l1_ph-re}) in \cite{BYZ} and had the conclusion that a $ k $-sparse signal can be stably recovered by $ \mathcal{O}(k\log(n/k)) $ Gaussian random measurements. Then a natural question that comes to mind is whether this conclusion still holds for a general frame $ D $.
\subsection{Organization}
The rest of the paper is organized as follows. In section 2, we give notations and recall some previous conclusions. In section 3, for noiseless case ($ \epsilon=0 $), we analyze the null space of the measurement matrix and give sufficient and necessary conditions for (\ref{l1analysis}) to achieve an exact solution, which will be discussed in real and complex case separately.  In general, it's hard to check whether a matrix satisfies the null space property or not. So in section 4, we introduce a new property (S-DRIP) (Definition \ref{drip}) on the measurement matrix, which is a natural generalization of the DRIP (see \cite{CEN10} for more details). Using this property, we prove that when the measurement matrix is real Gaussian random matrix with $ m\geq\mathcal{O}(k\log(n/k)) $ the $ \ell_1 $-analysis (\ref{l1analysis}) can guarantee a stable recovery of real signals which are $ k $-sparse under a redundant dictionary. Last, some proofs are given in the Appendix.

\section{Notations and previous results}
Throughout this paper, we use $ D\in\H^{n\times N} $ as a frame with full column rank.  
Let 
\[
\Sigma_k^N :=\{ x\in\H^N: \|x\|_0\leq k \}
\]
and
$$
D\Sigma_k^N :=\{x\in \H^{n}: \exists z\in\Sigma_k^N, x=Dz\}.
$$
Suppose the target signal $x_0$ is in the set $ D\Sigma_k^N $,
which means that $x_0$ can be represented as $x_0=Dz_0$, where $z_0\in \Sigma_k^N$. 

The best $ k $-term approximation error is defined as
\[
\sigma_k(x)_1 := \min_{z\in\Sigma_k}\|x-z\|_1.
\]
We use $ [m] $ to represent the set $ \{1,2,\ldots,m \} $. Suppose $ T\subseteq[m] $ is a subset of $ [m] $. We use $ T^c $ to represent the complement set of $ T $ and $ |T| $ to denote the cardinal number of $ T $. Let $ A_T:=[a_j, j\in T]\zz $ denote the sub-matrix of $ A $ where only rows with indices in $ T $ are kept. Denote $ \mathcal{N}(A) $ as the null space of $ A $.
\begin{definition}[DRIP]\cite{CEN10}
Fix a dictionary $ D\in\R^{n\times N} $ and a matrix $ A\in\R^{m\times n} $. The matrix $ A $ satisfies the DRIP with parameters $ \delta $ and $ k $ if
\[
(1-\delta)\|Dz\|_2^2\leq\|ADz\|_2^2\leq(1+\delta)\|Dz\|_2^2
\]
holds for all $ k $-sparse vectors $ z\in\R^N $.
\end{definition}
The paper \cite{CEN10} shows that the Gaussian matrices and other random compressed sensing matrices satisfy the DRIP of order $ k $ provided the number of measurements $ m $ on the order of $ \mathcal{O}(k\log(n/k)) $.
\begin{definition}[SRIP]\cite{VX14}\label{srip}
We say the matrix $A=[a_1,\cdots,a_m]\zz \in\mathbb{R}^{m\times n}$  has the Strong Restricted Isometry Property of order $ k $ and constants $\theta_-,\ \theta_+\in (0, 2)$ if
$$
\theta_-\|x\|_2^2\leq \min_{ |I|\subseteq[m], |I|\geq m/2}\|A_Ix\|_2^2\leq\max_{|I|\subseteq[m],|I|\geq m/2}
\|A_Ix\|_2^2\leq\theta_+\|x\|_2^2
$$
holds for all $  k $-sparse signals $x\in\mathbb{R}^n$.
\end{definition}
This property was first introduced in \cite{VX14}. Voroninski and Xu also proved that the Gaussian random matrices satisfy SRIP with high probability. More details can be found in the following theorem.
\begin{theorem}{\cite{VX14}}\label{gaussiansrip}
Suppose that $t>1$ and $ A\in \mathbb{R}^{m\times n} $ is a random Gaussian matrix with $m=\mathcal{O}(tk\log(n/k))$. Then there exist $\theta_-$,  $\theta_+$, with $0<\theta_-<\theta_+<2$, such that $ A $ satisfies SRIP of order $ tk $ and constants  $\theta_-$,   $\theta_+$, with probability $1-exp(-cm/2)$, where $c>0$ is an absolute constant and $\theta_-$,  $\theta_+$ are independent with $ t $.
\end{theorem}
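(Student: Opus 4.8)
The plan is to establish both SRIP inequalities for a single fixed $tk$-sparse unit vector and then pass to all such vectors by a covering argument. Normalize $A$ so that its entries are i.i.d.\ $\mathcal{N}(0,1/m)$ (otherwise the constants cannot lie in $(0,2)$). Fix a support set $T\subseteq[n]$ with $|T|=tk$ and a unit vector $x$ supported on $T$; then the inner products $a_i\zz x$ are i.i.d.\ $\mathcal{N}(0,1/m)$, so $Y_i:=m\,(a_i\zz x)^2$ are i.i.d.\ $\chi^2_1$. Writing $Y_{(1)}\le\cdots\le Y_{(m)}$ for the order statistics and $k_0:=\lceil m/2\rceil$, nonnegativity of the summands $(a_i\zz x)^2$ gives
\[
\max_{|I|\ge m/2}\|A_Ix\|_2^2=\frac1m\sum_{i=1}^m Y_i,\qquad
\min_{|I|\ge m/2}\|A_Ix\|_2^2=\frac1m\sum_{i=1}^{k_0}Y_{(i)},
\]
since the maximum is attained at $I=[m]$ and the minimum at the indices carrying the $k_0$ smallest values. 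For the upper bound, $\frac1m\sum_i Y_i$ is an average of i.i.d.\ $\chi^2_1$ variables, so the standard $\chi^2$ tail bound (Laurent--Massart) gives $\frac1m\sum_iY_i\le 1+\delta$ off an event of probability $\exp(-c_1\delta^2m)$; any $\theta_+\in(1,2)$ suffices.

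The substantive step is the lower bound $\frac1m\sum_{i=1}^{k_0}Y_{(i)}\ge\theta_-$ for an absolute constant $\theta_->0$. Let $q:=F^{-1}(\tfrac{1}{2}-\eta)$ be a slightly-below-median quantile of $\chi^2_1$ for a small constant $\eta>0$, where $F$ is the $\chi^2_1$ distribution function. A Chernoff bound for the Bernoulli count $\#\{i:Y_i\le q\}$, whose mean $(\tfrac{1}{2}-\eta)m$ lies below $k_0$, shows that with probability at least $1-\exp(-c_2\eta^2m)$ one has $\#\{i:Y_i\le q\}\le k_0$; on that event every index with $Y_i\le q$ is among the $k_0$ smallest, so $\sum_{i=1}^{k_0}Y_{(i)}\ge\sum_{i:\,Y_i\le q}Y_i=\sum_{i=1}^m Y_i\,\mathbf{1}[Y_i\le q]$. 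The last sum is a sum of i.i.d.\ variables bounded by $q$, so Bernstein's inequality gives $\frac1m\sum_iY_i\,\mathbf{1}[Y_i\le q]\ge\mathbb{E}\big[Y\,\mathbf{1}[Y\le q]\big]-\delta$ with probability at least $1-\exp(-c_3\delta^2m)$. Since $\mathbb{E}[Y\,\mathbf{1}[Y\le q]]=\int_0^{1/2-\eta}F^{-1}(u)\,du$ is a strictly positive absolute constant, choosing $\eta$ and $\delta$ small fixes an absolute $\theta_-\in(0,2)$ with $\min_{|I|\ge m/2}\|A_Ix\|_2^2\ge\theta_-$ outside an event of probability $\exp(-c_4m)$. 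Note that $\theta_\pm$ depend only on these $\chi^2_1$ constants, hence are independent of $t$, $k$, $n$.

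It remains to make the bounds uniform. There are at most $\binom{n}{tk}\le(en/(tk))^{tk}$ coordinate subspaces of dimension $tk$, and each carries an $\epsilon$-net of its unit sphere of size at most $(3/\epsilon)^{tk}$, so the total net cardinality is $\exp(C\,tk\log(n/k))$ for an absolute $C$. Union-bounding the single-vector failure events and requiring $m\ge C'\,tk\log(n/k)$ with $C'$ large enough, both SRIP inequalities hold on the whole net with probability at least $1-\exp(-cm/2)$. Finally, for an arbitrary $tk$-sparse unit $x$, pick a net point $x'$ on the same support with $\|x-x'\|_2\le\epsilon$; since $x-x'$ is $tk$-sparse, the upper bound yields $\|A_I(x-x')\|_2\le\|A(x-x')\|_2\le\sqrt{\theta_+}\,\epsilon$ for every $I$, whence $\big|\min_{|I|\ge m/2}\|A_Ix\|_2-\min_{|I|\ge m/2}\|A_Ix'\|_2\big|\le\sqrt{\theta_+}\,\epsilon$ and likewise for the maximum; absorbing this $O(\epsilon)$ perturbation (with $\epsilon$ small enough) produces slightly worse absolute constants $0<\theta_-<\theta_+<2$ valid for all $tk$-sparse vectors. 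I expect the main obstacle to be the lower bound of the second paragraph: the minimum ranges over exponentially many subsets $I$, and the decisive observation is that it equals a sum of order statistics, which can in turn be bounded below by a truncated empirical mean whose expectation is a fixed positive constant.
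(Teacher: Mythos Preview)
The paper does not give its own proof of this statement; Theorem~\ref{gaussiansrip} is quoted from \cite{VX14} without argument. The only hints the present paper offers are in the paragraph preceding Corollary~\ref{gauss_sdrip}: the single-vector concentration estimate \eqref{forsrip} is attributed to Lemma~4.4 of \cite{VX14}, and the passage from a fixed vector to all sparse vectors is said to follow by ``a standard covering argument (see the proof of Theorem~2.1 in \cite{VX14}).'' Your proposal matches this outline exactly: you reduce the minimum over subsets $I$ to a sum of the smallest $\lceil m/2\rceil$ order statistics, lower-bound that by a truncated empirical mean with strictly positive expectation, handle the maximum by ordinary $\chi^2$ concentration, and then union-bound over an $\epsilon$-net of the $tk$-sparse unit sphere. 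This is correct and is the natural route.

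One small circularity to tidy up in the net-to-all step: you invoke $\|A(x-x')\|_2\le\sqrt{\theta_+}\,\|x-x'\|_2$ to control the perturbation, but at that stage the upper bound has only been established on net points, not on the difference $x-x'$. The standard remedy is to bootstrap the upper bound first: set $\alpha:=\sup\{\|Ay\|_2:\ y\text{ is }tk\text{-sparse},\ \|y\|_2=1\}$, note that for any such $y$ and its net approximant $y'$ one has $\|Ay\|_2\le\|Ay'\|_2+\alpha\epsilon$, and solve $\alpha\le\sqrt{\theta_+^{\mathrm{net}}}+\alpha\epsilon$ to obtain a global upper bound $\alpha\le\sqrt{\theta_+^{\mathrm{net}}}/(1-\epsilon)$. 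Only then feed this into the lower-bound extension via $\min_I\|A_Ix\|_2\ge\min_I\|A_Ix'\|_2-\alpha\epsilon$. With that adjustment the argument is complete.
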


\section{The Null Space Property}
In this section, for $ x_0\in D\Sigma_k^N $, we consider the noiseless situation
\begin{equation}\label{ nsp model}
\min\|D^*x\|_1\mhsp \mbox{subject to} \mhsp |Ax|=|Ax_0|.
\end{equation}
Similarly as the traditional compressed sensing problem, we analyze the null space of the measurement matrix $ A $ to explore conditions under which (\ref{ nsp model}) can obtain $ cx_0 $ ($ |c|=1 $).
\subsection{The Real Case} 
We first restrict the signals and measurements to the field of real numbers. The next theorem provides a sufficient and necessary condition for the exact recovery of (\ref{ nsp model}).
\begin{theorem}\label{NSPreal}
For given matrix $ A\in\R^{m\times n} $ and dictionary $ D\in\R^{n\times N} $, we claim that the following properties are equivalent.
\begin{flushleft}
(A) For any $x_0\in  D\Sigma_k^N$,
$$
\textup{argmin}_{x\in\R^n}\{\|D^*x\|_1:|Ax|=|Ax_0|\}=\{\pm x_0\}.
$$
(B) For any $T\subseteq [m]$, it holds 
$$
\| D^*(u+v)\|_1<\| D^*(u-v)\|_1
$$
for all
$$
u\in \mathcal{N}(A_T)\backslash\{0\} ,\quad  v\in \mathcal{N}(A_{T^c})\backslash\{0\}
$$
satisfying
$$
u+v\in D\Sigma_k^N.
$$
\end{flushleft}
\end{theorem}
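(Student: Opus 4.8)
The plan is to prove the equivalence by establishing the contrapositive in each direction, exploiting the sign-flip symmetry inherent in phaseless measurements. The key observation is that the condition $|Ax| = |Ax_0|$ is equivalent to the existence of a sign pattern: for each row $j \in [m]$, either $\langle a_j, x\rangle = \langle a_j, x_0\rangle$ or $\langle a_j, x\rangle = -\langle a_j, x_0\rangle$. Splitting $[m]$ into the set $T$ of indices where the first case holds and $T^c$ where the second holds, one sees that $|Ax| = |Ax_0|$ holds if and only if there is a partition $[m] = T \cup T^c$ with $A_T(x - x_0) = 0$ and $A_{T^c}(x + x_0) = 0$. Setting $u = \tfrac{1}{2}(x - x_0)$ and $v = \tfrac{1}{2}(x + x_0)$, we get $u \in \mathcal{N}(A_T)$, $v \in \mathcal{N}(A_{T^c})$, $x_0 = v - u$, and $x = v + u$; note $x_0 \in D\Sigma_k^N$ translates to $u + v \in D\Sigma_k^N$ (up to the harmless factor $2$, which I will absorb).

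For the direction (B) $\Rightarrow$ (A): fix $x_0 \in D\Sigma_k^N$ and let $x$ be any feasible point for \eqref{ nsp model}, i.e.\ $|Ax| = |Ax_0|$. Using the partition above, write $x_0 = v - u$, $x = v + u$ with $u \in \mathcal{N}(A_T)$, $v \in \mathcal{N}(A_{T^c})$, and $u + v = x \in \H^n$ with $v - u = x_0 \in D\Sigma_k^N$. Wait — here it is $v - u$ rather than $u + v$ that lies in $D\Sigma_k^N$; replacing $u$ by $-u$ (which preserves $\mathcal{N}(A_T)$) puts us in the form of (B) with the roles adjusted, so $u - v \in D\Sigma_k^N$ and $x = u + v$ (relabeled). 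I would then split into cases. If $u = 0$ then $x = v = x_0$; if $v = 0$ then $x = u = -x_0$; in either case $x = \pm x_0$. If both $u, v$ are nonzero, (B) gives $\|D^*(u + v)\|_1 < \|D^*(u - v)\|_1$, i.e.\ $\|D^*x\|_1 < \|D^*x_0\|_1$, which is impossible if $x$ is a minimizer; and conversely this strict inequality shows $x_0$ itself always beats any such genuinely-different feasible point, so the argmin is exactly $\{\pm x_0\}$. I should double-check that the argmin is nonempty and equals this set and not something smaller, but $x_0$ is always feasible, so that is immediate.

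For the direction (A) $\Rightarrow$ (B), I argue the contrapositive: suppose (B) fails, so there exist $T \subseteq [m]$ and nonzero $u \in \mathcal{N}(A_T)$, $v \in \mathcal{N}(A_{T^c})$ with $u + v \in D\Sigma_k^N$ and $\|D^*(u+v)\|_1 \geq \|D^*(u-v)\|_1$. Set $x_0 := u + v$ (which lies in $D\Sigma_k^N$ as required) and $x := u - v$. Then the partition identity gives $|Ax| = |Ax_0|$: indeed $A_T(x - x_0) = A_T(-2v)$, hmm, this needs $v \in \mathcal{N}(A_T)$ which is not assumed — so instead I should check it directly via $A_T x = A_T u - A_T v = -A_T v$ and $A_T x_0 = A_T u + A_T v = A_T v$, giving $|A_T x| = |A_T x_0|$, and symmetrically on $T^c$ using $A_{T^c} v = 0$. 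So $x$ is feasible with $\|D^*x\|_1 \leq \|D^*x_0\|_1$, yet $x \neq \pm x_0$ because $u, v \neq 0$ forces $x \notin \{x_0, -x_0\}$ (if $x = x_0$ then $v = 0$; if $x = -x_0$ then $u = 0$). Hence the argmin contains a point other than $\pm x_0$ (or $x_0$ is not even a minimizer), contradicting (A).

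The main obstacle I anticipate is bookkeeping the sign conventions and the factor of $2$ in the change of variables $u, v \leftrightarrow x, x_0$, and being careful that "argmin $= \{\pm x_0\}$" is a statement about a set, so I must verify both that no other feasible point achieves the minimum (the hard inclusion, handled by (B)) and that $x_0$ genuinely is a minimizer (trivial, since it is feasible with its own objective value). A secondary subtlety is the degenerate cases $u = 0$ or $v = 0$, which must be handled separately since the strict inequality in (B) is only asserted for $u, v$ both nonzero; these cases correspond exactly to the acceptable global sign ambiguity $x = \pm x_0$, so they cause no real trouble but must be stated explicitly.
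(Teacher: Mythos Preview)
Your approach is correct and essentially identical to the paper's: both directions proceed via the change of variables linking a feasible pair $(x,x_0)$ with $|Ax|=|Ax_0|$ to a pair $(u,v)$ in $\mathcal{N}(A_T)\times\mathcal{N}(A_{T^c})$, and then argue by contradiction/contrapositive. The paper sets $u:=x_0+\hat{x}$ and $v:=x_0-\hat{x}$ so that $u+v=2x_0\in D\Sigma_k^N$ directly, which sidesteps the sign-relabeling tangle you anticipated; note that after your relabeling in the (B)$\Rightarrow$(A) paragraph the inequality should read $\|D^*x_0\|_1<\|D^*x\|_1$ (not the reverse), which is in fact the conclusion you correctly use in the next clause.
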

\begin{proof}
(B)$\Rightarrow$(A).
Assume (A) is false, namely, there exists a solution $ \hat x\neq \pm x_0 $ to (\ref{ nsp model}).
As $ \hat{x} $ is a solution, we have
\begin{align}\label{realcon1}
|A\hat x|=|Ax_0|
\end{align}
and
\begin{align}\label{realcon2}
\| D^*{\hat x}\|_1\leq\| D^*x_0\|_1.
\end{align}
Denote $a_j\zz, j=1,\ldots,m$ as the rows of $ A $.
Then (\ref{realcon1}) implies that there exists a subset $T\subseteq[m]$ satisfying
\[
j\in T, \quad \langle a_j, x_0+\hat{x}\rangle=0,
\]
\[
j\in T^c, \quad \langle a_j, x_0-\hat{x}\rangle=0,
\]
i.e.,
$$
 A_T(x_0+\hat x)=0, \quad A_{T^c}(x_0-\hat x)=0.
$$
Define
$$
 u:=x_0+\hat x,\quad v:=x_0-\hat x.
$$
As $ \hat{x}\neq\pm x_0 $, we have $u\in \mathcal{N}(A_T)\backslash\{0\}$, $v\in \mathcal{N}(A_{T^c})\backslash \{0\}$ and $u+v=2x_0\in D\Sigma_k^N$.
Then from (B), we know
$$
 \| D^*x_0\|_1<\| D^*\hat x\|_1,
$$
which contradicts with (\ref{realcon2}).

(A)$\Rightarrow$(B). Assume (B) is false, which means that there exists a subset $T\subseteq [m]$,
\begin{equation}\label{zeroterm}
u\in \mathcal{N}(A_T)\backslash\{0\},\quad v\in \mathcal{N}(A_{T^c})\backslash \{0\},
\end{equation}
such that
$$
u+v\in D\Sigma_k^N
$$
and
\begin{equation}\label{real_1}
\| D^*(u+v)\|_1\geq \| D^*(u-v)\|_1.
\end{equation}
Let $ x_0:=u+v\in D\Sigma_k^N$ be the signal we want to recover. Set $\tilde{x} :=u-v$ and we have $ \tilde{x}\neq\pm x_0 $. Then from (\ref{real_1}) we have
\begin{equation}\label{realproof1}
\| D^*\tilde {x}\|_1\leq\| D^*x_0\|_1.
\end{equation}
Let $a_j\zz, j=1,\ldots,m$ denote the rows of $ A $. Then from the definition of $ x_0 $ and $ \tilde{x} $, we have
$$
 2\langle a_j,u \rangle=\langle a_j,x_0+\tilde{x}\rangle,
$$
$$
2\langle a_j,v\rangle=\langle a_j,x_0-\tilde{x}\rangle.
$$
By (\ref{zeroterm}), the subset $ T $ satisfies
$$
j\in T,\quad  \langle a_j,x_0\rangle=-\langle a_j,\tilde{x} \rangle
$$
and
$$
j\in T^c, \quad \langle a_j,x_0\rangle=\langle a_j,\tilde{x} \rangle,
$$
which implies
\begin{equation}\label{realproof2}
|Ax_0|=|A\tilde{x}|.
\end{equation}
Putting (\ref{realproof1}) and (\ref{realproof2}) together, we know $ \tilde{x} $ is a solution to model (\ref{ nsp model}). However, $\tilde{x}\neq\pm x_0 $ contradicts with (A).
\end{proof}
\subsection{The Complex Case}
We now consider the same problem in complex case which means that the signals and measurements are all in the complex number field. We say that $ \mathcal{S}=\{S_1,\ldots, S_p\} $ is any partition of $ [m] $ if 
\[
S_j\in[m],\,\, \bigcup_{j=1}^{p} S_j=[m] \,\,\text{and}\,\, S_l\cap S_j=\emptyset, \forall\, l\neq j.
\]
Set $ \mathbb{S}:=\{ c\in \C, |c|=1\} $.
The next theorem is a generalization of Theorem \ref{NSPreal}.
\begin{theorem}\label{NSPcomp}
For given matrix $ A\in\C^{m\times n} $ and dictionary $ D\in\C^{n\times N} $, we claim that the following properties are equivalent.
\begin{flushleft}
(A) For any given $x_0\in D\Sigma_k^N$,
$$
\textup{argmin}_{x\in\C^n}\{\|D^*x\|_1:|Ax|=|Ax_0|\}=\{cx_0, c\in\mathbb{S}\}.
$$
(B) Suppose $ S_1,\ldots,S_p$ is any partition of $[m]$. For any given
$\eta_j\in \mathcal{N}(A_{S_j})\backslash\{0\}$, if
\begin{equation}\label{comp_nsp_condition}
\frac{\eta_1-\eta_l}{c_1-c_l}=\frac{\eta_1-\eta_j}{c_1-c_j} \in D\Sigma_k^N\backslash\{0\},\,\,j,l \in[2:p],\,\,j\neq l
\end{equation}
holds for some pairwise distinct $c_1,\ldots,c_p\in\mathbb{S}$, we have
$$
\| D^*(\eta_j-\eta_l)\|_1<\| D^*(c_l\eta_j-c_j\eta_l)\|_1.
$$
\end{flushleft}
\end{theorem}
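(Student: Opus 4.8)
The plan is to follow the blueprint of the proof of Theorem~\ref{NSPreal} and establish the two implications separately. The only genuinely new ingredient is that in the complex setting the phaseless constraint $|Ax|=|Ax_0|$ no longer splits $[m]$ into a complementary pair $T,T^{c}$ but into a whole partition, indexed by the per-row phase ratios $\langle a_i,\hat x\rangle/\langle a_i,x_0\rangle\in\mathbb{S}$.

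\emph{$(B)\Rightarrow(A)$.} Suppose $(A)$ fails, i.e.\ some minimizer $\hat x$ of (\ref{ nsp model}) satisfies $\hat x\notin\{cx_0:c\in\mathbb{S}\}$, so that $|A\hat x|=|Ax_0|$ and $\|D^{*}\hat x\|_{1}\le\|D^{*}x_0\|_{1}$; we may assume $x_0\neq0$ (if $x_0=0$, injectivity of $D^{*}$ forces $\hat x=0$). For each row $i$, $|\langle a_i,\hat x\rangle|=|\langle a_i,x_0\rangle|$ yields a unimodular $c_i\in\mathbb{S}$ with $\langle a_i,\hat x\rangle=c_i\langle a_i,x_0\rangle$, uniquely determined when the common value is nonzero and otherwise chosen to coincide with a ratio already in use. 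Let $c_1,\dots,c_q$ be the distinct ratios that occur and $S_j:=\{i:c_i=c_j\}$; if $q=1$, adjoin one more constant $c_2\in\mathbb{S}\setminus\{c_1\}$ with $S_2:=\emptyset$, so that in every case $S_1,\dots,S_p$ is a partition of $[m]$ with $p\ge2$. Set $\eta_j:=c_jx_0-\hat x$. One then checks, row by row (using $\mathcal N(A_{\emptyset})=\H^{n}$ for an empty block), that $\eta_j\in\mathcal N(A_{S_j})$, that $\eta_j\neq0$ (else $\hat x=c_jx_0\in\mathbb{S}x_0$), and that $(\eta_1-\eta_j)/(c_1-c_j)=x_0\in D\Sigma_k^N\setminus\{0\}$ for every $j$, so the hypotheses of $(B)$ are met. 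Thus $\|D^{*}(\eta_j-\eta_l)\|_{1}<\|D^{*}(c_l\eta_j-c_j\eta_l)\|_{1}$ for some pair $j\neq l$; substituting the identities $\eta_j-\eta_l=(c_j-c_l)x_0$ and $c_l\eta_j-c_j\eta_l=(c_j-c_l)\hat x$ and dividing by $|c_j-c_l|>0$ gives $\|D^{*}x_0\|_{1}<\|D^{*}\hat x\|_{1}$, contradicting minimality of $\hat x$.

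\emph{$(A)\Rightarrow(B)$.} I would prove the contrapositive. If $(B)$ fails there are a partition $S_1,\dots,S_p$ (necessarily $p\ge2$, otherwise the conclusion of $(B)$ is vacuous), vectors $\eta_j\in\mathcal N(A_{S_j})\setminus\{0\}$, and distinct $c_1,\dots,c_p\in\mathbb{S}$ with $x_0:=(\eta_1-\eta_j)/(c_1-c_j)$ independent of $j$ and in $D\Sigma_k^N\setminus\{0\}$, yet $\|D^{*}(\eta_{j_0}-\eta_{l_0})\|_{1}\ge\|D^{*}(c_{l_0}\eta_{j_0}-c_{j_0}\eta_{l_0})\|_{1}$ for some $j_0\neq l_0$. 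Define $\hat x:=c_1x_0-\eta_1$; rearranging the defining relation gives $\eta_j=c_jx_0-\hat x$ for all $j$, whence $A_{S_j}(\hat x-c_jx_0)=0$ shows $|\langle a_i,\hat x\rangle|=|\langle a_i,x_0\rangle|$ on each $S_j$, and since the $S_j$ cover $[m]$ we get $|A\hat x|=|Ax_0|$, so $\hat x$ is feasible; the broken inequality reduces, via the same two substitutions, to $\|D^{*}\hat x\|_{1}\le\|D^{*}x_0\|_{1}$. If $\hat x\notin\mathbb{S}x_0$ this already refutes $(A)$. If instead $\hat x=cx_0$ with $c\in\mathbb{S}$, then $\eta_j=(c_j-c)x_0\neq0$ forces $c_j\neq c$, hence $A_{S_j}x_0=0$ for all $j$ and $x_0\in\mathcal N(A)$; the feasible set of (\ref{ nsp model}) is then $\mathcal N(A)$, on which (by full column rank of $D$, i.e.\ injectivity of $D^{*}$) the objective $\|D^{*}x\|_{1}$ is minimized only at $x=0$, so the minimizer set is $\{0\}\neq\mathbb{S}x_0$, again contradicting $(A)$.

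I expect the main obstacle to be the correct set‑up of the partition in the first implication. In the real case $\langle a_i,x_0\rangle=\pm\langle a_i,\hat x\rangle$ leaves only the two blocks $T,T^{c}$, whereas here the ratio $c_i$ may be any point of $\mathbb{S}$, so one must (i) group the rows by their ratio, (ii) make a consistent choice of ratio on the rows where both inner products vanish, and (iii) in the degenerate case $\hat x-cx_0\in\mathcal N(A)$ (only one ratio) recover a genuine $p\ge2$ partition by padding with an empty block and an auxiliary unimodular constant. Once the partition and the vectors $\eta_j=c_jx_0-\hat x$ are in place, the remaining work — the two linear identities for $\eta_j-\eta_l$ and $c_l\eta_j-c_j\eta_l$, and the invocation of full column rank of $D$ — is routine and runs in close parallel with the real case.
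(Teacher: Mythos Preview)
Your argument follows the paper's overall strategy --- partition $[m]$ by the per-row phase, set $\eta_j=c_jx_0-\hat x$, and compare $\|D^*x_0\|_1$ with $\|D^*\hat x\|_1$ via the identities $\eta_j-\eta_l=(c_j-c_l)x_0$ and $c_l\eta_j-c_j\eta_l=(c_j-c_l)\hat x$ --- but improves on it in two places. In $(A)\Rightarrow(B)$ you invert the construction globally, defining $\hat x:=c_1x_0-\eta_1$ and observing $\eta_j=c_jx_0-\hat x$ for \emph{every} $j$, which gives $A_{S_j}\hat x=c_jA_{S_j}x_0$ and hence $|A\hat x|=|Ax_0|$ in one line; the paper instead sets $\tilde x=c_{l_0}\eta_{j_0}-c_{j_0}\eta_{l_0}$, $x_0=\eta_{j_0}-\eta_{l_0}$ (the same pair up to the scalar $c_{j_0}-c_{l_0}$) and verifies $|\langle a_k,\tilde x\rangle|=|\langle a_k,x_0\rangle|$ block by block, rewriting $\eta_{j_0},\eta_{l_0}$ in terms of $\eta_t$ on each $S_t$. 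More importantly, you treat the degenerate possibility $\hat x\in\mathbb{S}x_0$: the paper simply asserts $\tilde x\notin\mathbb{S}x_0$, but this can fail (e.g.\ $c_{l_0}=1$, $c_{j_0}=-1$, $\eta_{j_0}=i\eta_{l_0}$ gives $\tilde x=-ix_0$), and your observation that then $Ax_0=0$, so the minimizer set is $\{0\}\neq\mathbb{S}x_0$, closes that gap. Your edge-case handling in $(B)\Rightarrow(A)$ (the case $x_0=0$ and padding with an empty block when only one ratio occurs) is likewise more careful than the paper.

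One terminological slip: ``full column rank of $D$'' and ``injectivity of $D^{*}$'' are not equivalent; what you actually use is the latter, which is precisely the frame condition. The paper's own phrase ``frame with full column rank'' is already ambiguous, so this may simply be inherited wording, but it would be cleaner to say ``since $D$ is a frame, $D^{*}$ is injective''.
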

\begin{proof}
$(B)\Rightarrow(A)$. Suppose the statement (A) is false. That is to say, there exists a solution $ \hat{x}\notin \{cx_0, c\in\mathbb{S}\}$ to (\ref{ nsp model}) which satisfies
\begin{equation}\label{compconf1}
\| D^*\hat x\|_1\leq\| D^*x_0\|_1
\end{equation}
and
\begin{equation}\label{complexproof1}
|Ax_0|=|A\hat x|.
\end{equation}
Denote $a_j^*, j=1,\ldots,m$ as the rows of $ A $. From (\ref{complexproof1}) we have
$$
\langle a_j,c_jx_0\rangle=\langle a_j,\hat x\rangle,
$$
with $ c_j\in\mathbb{S}, \,j=1,\ldots, m  $. We can define an equivalence relation on $[m]$, namely $j\sim l$, when $c_j=c_l$. This equivalence relation leads to a partition $\mathcal{S}=\{S_1,\ldots,S_p\}$ of $[m]$. For any $S_j$, we have
$$
A_{S_j}(c_jx_0)=A_{S_j}\hat x.
$$
Set $\eta_j:=c_jx_0-\hat x$. Then we have $\eta_j\in \mathcal{N}(A_{S_j})\backslash\{0\}$ and
$$
\frac{\eta_1-\eta_l}{c_1-c_l}=\frac{\eta_1-\eta_j}{c_1-c_j}=x_0\in D\Sigma_k^N\,\,\,\,\forall j,l \in[2:p],\,\,j\neq l.
$$
By the condition (B), we can get
$$
\| D^*(\eta_j-\eta_l)\|_1<\| D^*(c_l\eta_j-c_j\eta_l)\|_1,
$$
i.e.,
$$
\| D^*(c_j-c_l)x_0\|_1<\| D^*(c_j-c_l)\hat x\|_1.
$$
That is equivalence to
$$
\| D^*x_0\|_1<\| D^*\hat x\|_1,
$$
which contradicts with (\ref{compconf1}).

$(A)\Rightarrow(B)$. Assume (B) is false, namely, there exists a partition $\mathcal{S}=\{S_1,\ldots,S_p\}$ of $[m]$, $\eta_j\in \mathcal{N}(A_{S_j})\backslash\{0\}$, $ j\in[1:p] $ and some pairwise distinct $ c_1,\ldots,c_p\in\mathbb{S} $ satisfying (\ref{comp_nsp_condition}) but
$$
\| D^*(\eta_{j_0}-\eta_{l_0})\|_1\geq\| D^*(c_{l_0}\eta_{j_0}-c_{j_0}\eta_{l_0})\|_1
$$
holds for some distinct $j_0, l_0\in [1:p]$.
Set
$$
\tilde{x}:=c_{l_0}\eta_{j_0}-c_{j_0}\eta_{l_0}, \,\, c_{l_0}\neq c_{j_0},
$$
$$
x_0:=\eta_{j_0}-\eta_{l_0}\in D\Sigma_k^N.
$$
Then we have
\[
\tilde{x}\notin\{cx_0, c\in\mathbb{S}\}
\]
and 
\begin{equation}\label{complexproof2}
\| D^*\tilde{x}\|_1\leq\| D^*x_0\|_1.
\end{equation}
Let $a_j^*, j=1,\ldots,m$ denote the rows of $ A $. From $ \eta_j\in \mathcal{N}(A_{S_j})\backslash\{0\}  $, we have
\begin{align*}
\langle a_k,\eta_{j_0}\rangle=0\,\,\, \text{and}\,\, \, \langle a_k,\eta_{l_0}\rangle=0, \quad \forall  k\in S_{l_0}\cup S_{j_0}.
\end{align*}
The definition of $ x_0 $ and $ \tilde{x} $ implies
\begin{equation}\label{complexproof3}
|\langle a_k,x_0\rangle|=|\langle a_k,\tilde{x}\rangle|, \,\,\forall\,\,  k\in S_{l_0}\cup S_{j_0}.
\end{equation}
For $k\notin S_{l_0}\cup S_{j_0} $, we might as well suppose $k\in S_t $ $ (t\neq l_0, j_0)$, i.e., $ \langle a_k, \eta_t\rangle = 0 $.  From
$$
\frac{\eta_1-\eta_l}{c_1-c_l}=\frac{\eta_1-\eta_j}{c_1-c_j},
$$
we can get
$$
\frac{\eta_j-\eta_l}{c_j-c_l}=\frac{\eta_m-\eta_n}{c_m-c_n}, \,\,\text{for any}\,\,j,l,m,n \,\,\text{are distinct integers}.
$$
Set
$$
y_0:= \frac{\eta_{j_0}-\eta_t}{c_{j_0}-c_t}=\frac{\eta_{l_0}-\eta_t}{c_{l_0}-c_t}.
$$
Then we have
$$
\eta_{j_0}=(c_{j_0}-c_t)y_0+\eta_t,
$$
$$
\eta_{l_0}=(c_{l_0}-c_t)y_0+\eta_t.
$$
So $ \tilde{x} $ and $ x_0 $ can be rewritten as 
$$
\tilde{x}=c_{l_0}\eta_{j_0}-c_{j_0}\eta_{l_0}=c_t(c_{j_0}-c_{l_0})y_0+(c_l-c_j)\eta_t,
$$
$$
x_0=\eta_{j_0}-\eta_{l_0}=(c_{j_0}-c_{l_0})y_0.
$$
Then $\langle a_k,\eta_t\rangle=0$ implies
\begin{equation*}
|\langle a_k,\tilde{x}\rangle|=|\langle a_k,x_0\rangle|,\quad \text{for}\,\, k\in S_t.
\end{equation*}
Using a similar argument, we can prove that the claim is also true for other subset $ S_j $. So we have
\begin{equation}\label{complexprooof4}
|\langle a_k,\tilde{x}\rangle|=|\langle a_k,x_0\rangle|, \quad \forall k.
\end{equation}
Combining (\ref{complexproof2}) and (\ref{complexprooof4}), we know $\tilde{x}$ is also a solution to (\ref{ nsp model}). However, $ \tilde{x}\notin\{cx_0, c\in\mathbb{S}\} $ contradicts with (A).
\end{proof}
\begin{remark}
If we chose $ D=I $, the null space property in Theorem \ref{NSPreal} and Theorem \ref{NSPcomp} is consistent with the null space property which was introduced in paper \cite{YZ}.
\end{remark}
By the Theorem \ref{NSPreal} and Theorem \ref{NSPcomp}, we know that it is possible to find a good measurement matrix to obtain an exact solution by solving the model (\ref{ nsp model}). But in general, condition (B) is difficult to be checked. So in section \ref{sdrip_sec}, we provide another property of the measurement matrix which can be satisfied by Gaussian random matrix. 

\section{S-DRIP and Stable Recovery}\label{sdrip_sec}
In compressed sensing, for any tight frame $ D $, \cite{CEN10} has the conclusion that a signal $ x_0\in D\Sigma_k^N $ can be approximately reconstructed using $ \ell_1 $-analysis (\ref{l1_com-sen}) provided the measurement matrix satisfies DRIP and $ D^*x_0 $ decays rapidly. While in phase retrieval, when $ \H=\R $, Gao, Wang and Xu proved that if the measurement matrix satisfies SRIP, then the $ \ell_1 $-analysis (\ref{l1_ph-re}) can provide a stable solution to traditional phase retrieval problem \cite{BYZ}. Next we combine this two results to explore the conditions under which the $ \ell_1 $-analysis model (\ref{l1analysis}) can guarantee a stable recovery.

We first impose a natural property on the measurement matrix, which is a combination of DRIP and SRIP.
\begin{definition}[S-DRIP]\label{drip}
Let $ D\in\R^{n\times N} $ be a frame. 
We say the measurement matrix $ A $ obeys the S-DRIP of order $ k $ with constants $\theta_-, \theta_+\in(0:2)$ if
$$
\theta_-\|Dv\|_2^2\leq \min_{ I\subseteq[m], |I|\geq m/2}\|A_IDv\|_2^2\leq\max_{I\subseteq[m],|I|\geq m/2}
\|A_IDv\|_2^2\leq\theta_+\|Dv\|_2^2
$$
holds for all $ k $-sparse signals $v\in\R^N$.
\end{definition}
Thus a matrix $ A\in\mathbb{R}^{m\times n} $ satisfying S-DRIP means that any $ m'\times n $ submatrix of $ A $, with $ m'\geq m/2 $ satisfies DRIP with appropriate parameters.

In fact any matrix $ A\in\R^{m\times n} $ obeying
\begin{equation}\label{forsrip}
\mathbb{P}[c_-\|Dv\|_2^2\leq\min_{ I\subseteq[m], |I|\geq m/2}\|A_IDv\|_2^2\leq\max_{I\subseteq[m],|I|\geq m/2}
\|A_IDv\|_2^2\leq c_+\|Dv\|_2^2]\geq 1-2e^{-\gamma m}
\end{equation}
($ 0<c_-<c_+<2 $ and $ \gamma $ is a positive number constant) for fixed $ Dv\in\R^n $ will satisfy the S-DRIP with high probability. This can be seen by a standard covering argument (see the proof of Theorem 2.1 in \cite{VX14}). In \cite{VX14}, Voroninski and Xu proved that Gaussian random matrix satisfies (\ref{forsrip}) in Lemma 4.4.
So we have the following conclusion.
\begin{corollary}\label{gauss_sdrip}
Gaussian random matrix $ A\in\R^{m\times n} $ with $ m=\mathcal{O}(tk\log(n/k)) $ satisfies the S-DRIP of order $ tk $ with constants $\theta_-, \theta_+\in(0:2)$. 
\end{corollary}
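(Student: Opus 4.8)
The plan is to bootstrap the pointwise concentration bound \eqref{forsrip} --- which, for a fixed vector $Dv\in\R^n$, is supplied for a Gaussian $A$ by Lemma 4.4 of \cite{VX14} --- to the uniform S-DRIP statement by a standard $\varepsilon$-net argument over the set $D\Sigma_{tk}^N$, exactly as in the proof of Theorem 2.1 of \cite{VX14}. The key structural observation is that $D\Sigma_{tk}^N=\bigcup_S V_S$ is a union of $\binom{N}{tk}$ linear subspaces of $\R^n$, where $S$ runs over the size-$tk$ subsets of $[N]$ and $V_S:=\mathrm{span}\{De_j:j\in S\}$ has dimension at most $tk$; this is the only place $D$ enters, and it is what replaces the coordinate subspaces of $\R^n$ used in the plain-SRIP proof.

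First I would fix, for each $S$, an $\varepsilon$-net $\mathcal{N}_S$ of the unit sphere of $V_S$, with $|\mathcal{N}_S|\le(1+2/\varepsilon)^{tk}$ by the usual volumetric estimate, and apply \eqref{forsrip} to every point of $\bigcup_S\mathcal{N}_S$. A union bound shows that the event
$$
c_-\le\min_{|I|\geq m/2}\|A_Iy\|_2^2\le\max_{|I|\geq m/2}\|A_Iy\|_2^2\le c_+\qquad\text{for all }y\in\textstyle\bigcup_S\mathcal{N}_S
$$
fails with probability at most $2\binom{N}{tk}(1+2/\varepsilon)^{tk}e^{-\gamma m}\le(eN/(tk))^{tk}(1+2/\varepsilon)^{tk}\,2e^{-\gamma m}$, which is exponentially small once $m\ge C_{\gamma,\varepsilon}\,tk\log(N/(tk))$; that is, as soon as $m=\mathcal{O}(tk\log(N/k))$, the order claimed in the statement.

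Next I would upgrade the estimate from the net to all of $D\Sigma_{tk}^N$ by the self-bounding trick, applied inside each fixed $V_S$. Let $M_S:=\sup\{\max_{|I|\geq m/2}\|A_Iy\|_2:y\in V_S,\ \|y\|_2=1\}$, which is finite as a supremum of a continuous function over a compact set (and there are finitely many admissible $I$). For a unit $y\in V_S$, picking $y'\in\mathcal{N}_S$ with $\|y-y'\|_2\le\varepsilon$ and noting that $(y-y')/\|y-y'\|_2$ is again a unit vector of $V_S$, one gets, uniformly in $I$, $\|A_Iy\|_2\le\sqrt{c_+}+\varepsilon M_S$ and $\|A_Iy\|_2\ge\sqrt{c_-}-\varepsilon M_S$; taking the supremum over $y$ in the first inequality yields $M_S\le\sqrt{c_+}/(1-\varepsilon)$. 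Squaring and rescaling by $\|Dv\|_2^2$, on the good event every $tk$-sparse $v$ satisfies the S-DRIP with
$$
\theta_-:=\Big(\sqrt{c_-}-\tfrac{\varepsilon\sqrt{c_+}}{1-\varepsilon}\Big)^2,\qquad\theta_+:=\frac{c_+}{(1-\varepsilon)^2}.
$$
Since $0<c_-<c_+<2$, one can fix $\varepsilon$ small enough, depending only on $c_-,c_+$, that $0<\theta_-<\theta_+<2$; these constants do not depend on $t$, in agreement with Theorem \ref{gaussiansrip}.

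The only genuinely delicate point is this last net-to-continuum passage, because the quantities being controlled are a minimum and a maximum over the exponentially many index sets $I$ with $|I|\geq m/2$: the perturbation $\|A_I(y-y')\|_2$ must be dominated uniformly in $I$, which is precisely why I route the argument through the single scalar $M_S$ rather than treating each $I$ separately. Everything else --- the net cardinality bound, the union-bound bookkeeping, and the final choice of $\varepsilon$ --- is routine, and the whole scheme is a transcription of the SRIP proof in \cite{VX14} with the coordinate subspaces replaced by the spans $V_S$ of columns of $D$.
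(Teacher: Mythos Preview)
Your proposal is correct and follows exactly the route the paper sketches: the paper's entire argument for this corollary is the sentence ``This can be seen by a standard covering argument (see the proof of Theorem 2.1 in \cite{VX14})'' together with the pointwise input from Lemma~4.4 of \cite{VX14}, and you have simply written out that covering argument in detail, correctly replacing the coordinate subspaces of $\R^n$ by the column-spans $V_S$ of $D$. One small remark: your net argument naturally produces $m=\mathcal{O}\!\big(tk\log(N/(tk))\big)$ rather than the $\log(n/k)$ appearing in the stated corollary, since the union runs over $\binom{N}{tk}$ supports in $\R^N$; this is the right dependence for a redundant dictionary $D\in\R^{n\times N}$, and the discrepancy is a harmless imprecision in the paper's statement rather than a gap in your proof.
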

For $ x_0\in D\Sigma_k^N  $, we return to consider the solving model
\begin{equation}\label{l1 dictionary model}
\min\|D^*x\|_1\mhsp \mbox{subject to} \mhsp \||Ax|-|Ax_0|\|_2^2\leq\epsilon^2,
\end{equation}
where $ \epsilon $ is the error bound.
Here all signals and matrices are all restricted to the real number field.
The next theorem tells under what conditions the solution to (\ref{l1 dictionary model}) is stable.
\begin{theorem}\label{maintheorem}
Assume that $ D\in\R^{n\times N} $ is a tight frame and $ x_0\in D\Sigma_k^N $. The matrix $A\in \mathbb{R}^{m\times n}$ satisfies the S-DRIP of order $tk$ (t is a positive integer) and level $ \theta_-, \theta_+ \in (0:2)$, with
$$
t\geq\max\{\frac{1}{2\theta_--\theta_-^2},\frac{1}{2\theta_+-\theta_+^2}\}.
$$
Then the solution $ \hat{x} $ to (\ref{l1 dictionary model}) satisfies
$$
\min\{\|\hat{x}-x_0\|_2,\|\hat{x}+x_0\|_2\}\leq c_1\epsilon+c_2\frac{2\sigma_k(D^*x_0)_1}{\sqrt{k}},
$$
where
$ c_1=\frac{\sqrt{2(1+\delta)}}{1-\sqrt{t/(t-1)}\delta} $, $  c_2=\frac{\sqrt{2}\delta+\sqrt{t(\sqrt{(t-1)/t}-\delta)\delta}}{t(\sqrt{(t-1)/t}-\delta)}+1.$
Here $ \delta $ is a constant satisfying
\[
\delta\leq\max\{1-\theta_-, \theta_+-1  \}\leq\sqrt{\frac{t-1}{t}}.
\]
\end{theorem}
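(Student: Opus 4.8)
The strategy is to reduce the phaseless problem to a linear one on at least half the measurements — as in the SRIP approach to phase retrieval — and then run the analysis-$\ell_1$ argument for tight frames as in \cite{CEN10}, in the sharp (polytope) form that matches the order-$tk$ hypothesis. \textbf{Step 1 (reduction).} For each row $a_j$ of $A$ put $e_j:=|\langle a_j,\hat x\rangle|-|\langle a_j,x_0\rangle|$, so $\sum_j e_j^2\le\epsilon^2$. An elementary sign dichotomy gives $\min\{|\langle a_j,\hat x-x_0\rangle|,\ |\langle a_j,\hat x+x_0\rangle|\}\le|e_j|$ for every $j$. Let $T$ collect the rows where this minimum is attained by $\hat x-x_0$; then $\|A_T(\hat x-x_0)\|_2^2+\|A_{T^c}(\hat x+x_0)\|_2^2\le\epsilon^2$. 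Choose whichever of $T,T^c$ has at least $m/2$ elements, call it $I$, and let $h$ be the corresponding $\hat x-x_0$ or $\hat x+x_0$; then $\|A_Ih\|_2\le\epsilon$, $|I|\ge m/2$, and by Definition~\ref{drip} the submatrix $A_I$ obeys the DRIP of order $tk$ with constant $\delta=\max\{1-\theta_-,\theta_+-1\}$. Since $\min\{\|\hat x-x_0\|_2,\|\hat x+x_0\|_2\}\le\|h\|_2$, it suffices to bound $\|h\|_2$.

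\textbf{Step 2 (cone constraint and the frame).} Because $x_0$ is feasible for \eqref{l1 dictionary model}, $\|D^*\hat x\|_1\le\|D^*x_0\|_1$; writing $D^*\hat x=\pm D^*x_0+D^*h$ and splitting at the index set $T_0\subseteq[N]$ of the $k$ largest-magnitude entries of $D^*x_0$ yields in both cases the cone inequality
\begin{equation*}
\|(D^*h)_{T_0^c}\|_1\le\|(D^*h)_{T_0}\|_1+2\sigma_k(D^*x_0)_1 .
\end{equation*}
Normalize $D$ so that $DD^*=I_n$; then $h=D(D^*h)$, and since $D^*h$ lies in the range of $D^*$, on which $D^*D$ is the identity, one has $\|h\|_2=\|D^*h\|_2$. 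Put $v:=D^*h$, sort the entries of $v_{T_0^c}$, and split $T_0^c$ into consecutive blocks $T_1,T_2,\dots$ of size $(t-1)k$, so $T_{01}:=T_0\cup T_1$ has size $tk$. The usual block bound together with the cone inequality gives $\sum_{j\ge2}\|v_{T_j}\|_2\le\|v_{T_0^c}\|_1/\sqrt{(t-1)k}\le\|v_{T_0}\|_2/\sqrt{t-1}+2\sigma_k(D^*x_0)_1/\sqrt{(t-1)k}$; to land the sharp constants one replaces this dyadic splitting by a Cai--Zhang-type representation of $v_{T_0^c}$ as a convex combination of $(t-1)k$-sparse vectors supported in $T_0^c$.

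\textbf{Step 3 (DRIP and the final estimate).} From $A_Ih=A_IDv_{T_{01}}+\sum_{j\ge2}A_IDv_{T_j}$, use the DRIP lower bound on the $tk$-sparse $v_{T_{01}}$, the DRIP upper bound on each $(t-1)k$-sparse $v_{T_j}$, the frame contraction $\|Dw\|_2\le\|w\|_2$, and the identity $Dv_{T_{01}}=h-Dv_{(T_{01})^c}$ (whence $\|Dv_{T_{01}}\|_2\ge\|h\|_2-\sum_{j\ge2}\|v_{T_j}\|_2$). Combining these, one reaches an inequality of the shape
\begin{equation*}
\epsilon\ \ge\ \kappa\,\|h\|_2-\lambda\,\frac{2\sigma_k(D^*x_0)_1}{\sqrt k}
\end{equation*}
for explicit $\kappa,\lambda>0$. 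The hypothesis $t\ge\max\{\tfrac1{2\theta_--\theta_-^2},\tfrac1{2\theta_+-\theta_+^2}\}$ is exactly $\delta\le\sqrt{(t-1)/t}$, since $2\theta_\pm-\theta_\pm^2=1-(1-\theta_\pm)^2$; this keeps $\kappa>0$, and solving for $\|h\|_2$ and re-expressing $\delta$ through $\theta_-,\theta_+$ produces the denominator $1-\sqrt{t/(t-1)}\,\delta$ and the stated $c_1,c_2$.

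\textbf{Main obstacle.} The delicate point is Step 3. Because $v_{T_{01}}$ is a coordinate truncation of $v=D^*h$ it no longer lies in the range of $D^*$, so $D$ acts there only as a contraction and the head energy $\|Dv_{T_{01}}\|_2$ cannot be compared to $\|v_{T_{01}}\|_2$ directly; it has to be recovered through $h=Dv_{T_{01}}+Dv_{(T_{01})^c}$, which couples it to the tail blocks. Pushing this coupling through while retaining the sharp factor $\sqrt{t/(t-1)}$ in the denominator — rather than the lossy $\sqrt{(1+\delta)/(t-1)}$-type term that the crude block splitting yields — is precisely what forces the polytope decomposition. By contrast, the phase-retrieval reduction of Step 1 and the bookkeeping translating $(\theta_-,\theta_+)$ into $\delta$ are routine.
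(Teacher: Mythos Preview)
Your Step~1 reduction is exactly the paper's proof of Theorem~\ref{maintheorem}: partition the rows by the sign of $\langle a_j,\hat x\rangle$ versus $\langle a_j,x_0\rangle$, pick the half with $\ge m/2$ rows, and observe that on that half the residual is linear with $\ell_2$ norm $\le\epsilon$; the S-DRIP hypothesis then places the corresponding submatrix in the DRIP regime with $\delta=\max\{1-\theta_-,\theta_+-1\}$, and the translation $t\ge 1/(2\theta_\pm-\theta_\pm^2)\Leftrightarrow \delta\le\sqrt{(t-1)/t}$ is just as you say. From there the paper invokes Lemma~\ref{mainlemma} as a black box. Your Steps~2--3 are an attempt to reprove that lemma, and this is where the proposal falls short.

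The gap is precisely in Step~3, and it is more serious than a missing polytope computation. You correctly diagnose that truncations of $v=D^*h$ leave the range of $D^*$, so DRIP only delivers $\|A_IDw\|_2$ in terms of $\|Dw\|_2$, not $\|w\|_2$. But the triangle-inequality route you sketch,
\[
\epsilon\ge\|A_Ih\|_2\ge\|A_IDv_{T_{01}}\|_2-\sum_{j\ge2}\|A_IDv_{T_j}\|_2\ge\sqrt{1-\delta}\,\|Dv_{T_{01}}\|_2-\sqrt{1+\delta}\sum_{j\ge2}\|v_{T_j}\|_2,
\]
followed by $\|Dv_{T_{01}}\|_2\ge\|h\|_2-\sum_{j\ge2}\|v_{T_j}\|_2$, loses a factor at \emph{two} places (the outer triangle inequality and the inner frame contraction), and no convex-combination refinement of the tail alone repairs this: the polytope representation controls $\sum_j\|v_{T_j}\|_2$, not the double loss. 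The paper does something structurally different. It never isolates $\|A_IDv_{T_{01}}\|_2$ via a triangle inequality; instead it sets $\beta_i=D^*_{S_0}h+h^{(1)}+\mu u_i$ with $u_i$ coming from the polytope lemma, uses the exact quadratic identity
\[
\sum_i\lambda_i\Bigl\|AD\Bigl(\sum_j\lambda_j\beta_j-\tfrac12\beta_i\Bigr)\Bigr\|_2^2=\tfrac14\sum_i\lambda_i\|AD\beta_i\|_2^2,
\]
and crucially exploits the tight-frame relation
\[
\bigl\langle D(D^*_{S_0}h+h^{(1)}),\,Dh^{(2)}\bigr\rangle
=\|D^*_{S_0}h+h^{(1)}\|_2^2-\|D(D^*_{S_0}h+h^{(1)})\|_2^2,
\]
which is what converts the DRIP-side quantities $\|D(\,\cdot\,)\|_2$ back into the coefficient-side quantities $\|\cdot\|_2$ without loss. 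This cross-term identity, specific to tight frames, is the missing mechanism in your outline; without it the coupling you describe does not unwind to the stated $c_1,c_2$, and the argument as written would only yield a weaker DRIP threshold of Cand\`es--Eldar--Needell--Randall type rather than the sharp $\delta_{tk}<\sqrt{(t-1)/t}$.
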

We first give a more general lemma, which is the key to prove Theorem \ref{maintheorem}.
\begin{lemma}\label{mainlemma}
Let $ D\in\mathbb{R}^{n\times N} $ be an arbitrary tight frame,
$ x_0\in D\Sigma_k^N$ and  $ \rho\geq 0$. Suppose that $ A\in\mathbb{R}^{m\times n} $ is a measurement matrix satisfying the DRIP with
$ \delta = \delta_{tk}^A\leq\sqrt{\frac{t-1}{t}} $ for some $ t>1 $.
Then for any
\[
D^*\hat{x}\in \{D^*x\in \mathbb{R}^N : \|D^*x\|_1\leq \|D^*x_0\|_1+\rho, \, \|Ax-Ax_0\|_2\leq\epsilon \},
\]
we have
\begin{equation*}
\|\hat{x}-x_0\|_2\leq c_1\epsilon+c_2\frac{2\sigma_k(D^*x_0)_1}{\sqrt{k}}+c_2\cdot\frac{\rho}{\sqrt{k}},
\end{equation*}
where
$ c_1=\frac{\sqrt{2(1+\delta)}}{1-\sqrt{t/(t-1)}\delta} $, $  c_2=\frac{\sqrt{2}\delta+\sqrt{t(\sqrt{(t-1)/t}-\delta)\delta}}{t(\sqrt{(t-1)/t}-\delta)}+1.$
\end{lemma}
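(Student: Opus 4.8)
The plan is to convert the statement into a deterministic sparse‑recovery estimate for the analysis coefficients of $h := \hat x - x_0$ and then run a sharp RIP‑type argument, combining the tight‑frame reduction of \cite{CEN10} with the Cai--Zhang ``sparse representation of a polytope'' technique, which is what produces the sharp coefficient $1-\sqrt{t/(t-1)}\,\delta$. Normalising the tight frame so that $DD^* = I_n$, set $f := D^*h \in \R^N$; then $Df = DD^*h = h$, so $\|h\|_2 = \|f\|_2$ and $ADf = Ah$, and this identity is exactly what lets DRIP (a statement about $A$ on $D\Sigma_{tk}^N$) act on $h$. The feasibility hypotheses on $D^*\hat x$ become two constraints on $f$: the \emph{tube constraint} $\|ADf\|_2 = \|Ah\|_2 \le \|A\hat x - Ax_0\|_2 \le \epsilon$, and, letting $T_0$ index the $k$ largest-magnitude coordinates of $D^*x_0$ (so $\|(D^*x_0)_{T_0^c}\|_1 = \sigma_k(D^*x_0)_1$), the \emph{cone constraint}
\[
\|f_{T_0^c}\|_1 \;\le\; \|f_{T_0}\|_1 + 2\sigma_k(D^*x_0)_1 + \rho,
\]
which drops out of $\|D^*x_0\|_1 + \rho \ge \|D^*\hat x\|_1 = \|D^*x_0 + f\|_1$ after splitting the $\ell_1$ norm over $T_0$ and $T_0^c$ and applying the triangle inequality. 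From here on the goal is to bound $\|f\|_2$.

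Next I would run the RIP estimate. Enlarge $T_0$ to $T_{01} = T_0 \cup T_1$ with $T_1$ the next $(t-1)k$ largest coordinates of $f$ ($|T_{01}| \le tk$), and record the shifting bound $\|f_{(T_{01})^c}\|_2 \le \|f_{T_0^c}\|_1/\sqrt{(t-1)k}$, which, combined with the cone constraint and $\|f_{T_0}\|_1 \le \sqrt k\,\|f_{T_0}\|_2 \le \sqrt k\,\|h\|_2$, controls the tail by a fixed fraction of $\|h\|_2$ plus an error in $\sigma_k(D^*x_0)_1$ and $\rho$. For the lower direction I would use $Df_{T_{01}} = h - Df_{(T_{01})^c}$ and the tight-frame contraction $\|D(\cdot)\|_2 \le \|\cdot\|_2$ to get $\|Df_{T_{01}}\|_2 \ge \|h\|_2 - \|f_{(T_{01})^c}\|_2$, hence $\|ADf_{T_{01}}\|_2 \ge \sqrt{1-\delta}\,(\|h\|_2 - \|f_{(T_{01})^c}\|_2)$. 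The crux is the upper direction: writing $ADf_{T_{01}} = Ah - ADf_{(T_{01})^c}$ and expanding $\|ADf_{T_{01}}\|_2^2 = \langle ADf_{T_{01}}, Ah\rangle - \langle ADf_{T_{01}}, ADf_{(T_{01})^c}\rangle$, one must estimate the cross term. A crude $\|ADf_{T_{01}}\|_2 \cdot \|ADf_{(T_{01})^c}\|_2$ bound is too lossy, so here I would bring in the Cai--Zhang polytope lemma to write the non-sparse part of $f$ as a convex combination of sufficiently sparse vectors to which DRIP and the polarization identity can be applied piece by piece; carrying out this averaging produces the sharp factor $\sqrt{t/(t-1)}$. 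Combining the two directions with the tail bound yields a self-referential inequality of the form
\[
\bigl(1 - \sqrt{t/(t-1)}\,\delta\bigr)\|h\|_2 \;\le\; \sqrt{1+\delta}\,\epsilon + C(t,\delta)\,\frac{2\sigma_k(D^*x_0)_1 + \rho}{\sqrt k}.
\]

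Finally, the hypothesis $\delta \le \sqrt{(t-1)/t}$ keeps the coefficient $1 - \sqrt{t/(t-1)}\,\delta$ nonnegative — and strictly positive whenever $\delta < \sqrt{(t-1)/t}$, which is the range in which $c_1,c_2$ are finite — so solving for $\|h\|_2$ and collecting the coefficients, where the extra $\sqrt 2$ in $c_1$ and the square-root cross term in $c_2$ come from folding $\|f_{(T_{01})^c}\|_2$ back into $\|h\|_2$ and completing the square, gives the stated estimate. The step I expect to be the main obstacle is precisely the sharp cross-term bound: getting the coefficient $1-\sqrt{t/(t-1)}\,\delta$ rather than the much weaker $1 - c\delta$ with $c>\sqrt{t/(t-1)}$ that follows from triangle inequalities forces the convex-decomposition device, and the two additive error terms $2\sigma_k(D^*x_0)_1$ and $\rho$ must be threaded through every estimate without degrading that constant. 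A secondary technical point is to keep the tight-frame normalisation $DD^* = I_n$ consistent throughout, since it is what simultaneously gives $Df = h$, $ADf = Ah$, and the contraction $\|D(\cdot)\|_2 \le \|\cdot\|_2$.
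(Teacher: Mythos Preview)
Your plan is correct in its essential ingredients---the tight-frame normalisation $DD^*=I$ (giving $Df=h$, $ADf=Ah$, and the contraction $\|D\cdot\|_2\le\|\cdot\|_2$), the tube and cone constraints, and the Cai--Zhang polytope decomposition---and these are exactly what the paper uses. The organisation, however, differs. The paper does not sandwich $\|ADf_{T_{01}}\|_2$ between upper and lower estimates; instead it lets $S_0$ index the $k$ largest entries of $D^*h$ (not of $D^*x_0$), splits the tail $D^*_{S_0^c}h=h^{(1)}+h^{(2)}$ by a magnitude threshold so that $\|h^{(1)}\|_0\le(t-1)k$ and $h^{(2)}$ lies in the polytope of Lemma~\ref{quotelemma1}, sets $\beta_i=D^*_{S_0}h+h^{(1)}+\mu u_i$ with the specific choice $\mu=\sqrt{t(t-1)}-(t-1)$, and then feeds everything into the single polarisation identity \eqref{eq:lamdabeta2} with $d=1/2$. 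Applying DRIP term by term yields a quadratic inequality in $z=\|D^*_{S_0}h+h^{(1)}\|_2$, whose root gives the stated $c_1,c_2$; the final passage from $z$ to $\|h\|_2$ uses Lemma~\ref{quotelemma2} and is where the $\sqrt 2$ enters.

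Two concrete points in your sketch would need repair. First, your main block $T_{01}$ already has cardinality $tk$, so any nonzero polytope piece $u_i$ supported in $(T_{01})^c$ makes $f_{T_{01}}+u_i$ exceed the DRIP order $tk$; the paper's threshold split is designed precisely so that $\|D^*_{S_0}h+h^{(1)}\|_0+\|u_i\|_0\le k+\ell+((t-1)k-\ell)=tk$. Second---and this is the genuinely dictionary-specific obstacle---for disjointly supported $u,v$ one does \emph{not} have $\langle Du,Dv\rangle=0$, so the usual RIP correlation bound does not port over directly. The paper handles this via the identity \eqref{usefulrelation}, namely $\langle D(D^*_{S_0}h+h^{(1)}),Dh^{(2)}\rangle=\|D^*_{S_0}h+h^{(1)}\|_2^2-\|D(D^*_{S_0}h+h^{(1)})\|_2^2$, which exploits that the pieces sum to $D^*h$ and $DD^*h=h$; without an analogue of this step your cross-term estimate will not close with the sharp constant.
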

We put the proof of this Lemma in the Appendix.
\begin{remark}
When $ D=I $, which corresponds to the case of standard compressed sensing, this result is consistent with Lemma 2.1 in \cite{BYZ}. 
\end{remark}
\begin{remark}
Here the DRIP constant is better than the constant given in \cite{Baker}. In \cite{Baker}, Baker established a generated DRIP constant for compressed sensing. He proved that signals with $ k $-sparse $ D $-representation can be reconstructed if the measurement matrix satisfies DRIP with constant $ \delta_{2k}<\frac{2}{3}$. We extended his approach to get a better bound $ \delta_{tk}\leq\sqrt{\frac{t-1}{t}} $. As \cite{CZ14} shows, in the special case $ D=I $, for any $ t\geq 4/3 $, the condition $ \delta_{tk}\leq\sqrt{\frac{t-1}{t}} $ is sharp for stable recovery in the noisy case. So it is not difficult to show that for any tight frame $ D $, the condition $ \delta_{tk}\leq\sqrt{\frac{t-1}{t}} $ is also sharp when $ t\geq 4/3 $.
\end{remark}
\begin{proof}[Proof of the Theorem \ref{maintheorem} ]
As $\hat{x}$ is the solution to (\ref{l1 dictionary model}), we have
\begin{equation}\label{option1}
    \|D^*\hat{x}\|_1\leq\|D^*x_0\|_1
\end{equation}
and
\begin{equation}\label{eq:option2}
   \||A\hat{x}|-|Ax_0|\|_2^2\leq\epsilon^2.
\end{equation}
 Denote $ a_j\zz, j\in\{1,\ldots,m\} $ as the rows of $ A $ and divide $\{1,\ldots,m\}$ into two groups:
\[
T=\{j\mid \sign(\innerp{a_j,\hat{x}})=\sign(\innerp{a_j,x_0})\},
\]
\[
T^c=\{j\mid\sign(\innerp{a_j,\hat{x}})=-\sign(\innerp{a_j,x_0})\}.
\]
Then either $|T|\geq m/2$ or  $|T^c|\geq m/2$. Without loss of generality,  we  suppose  $|T|\geq m/2$ .
Then (\ref{eq:option2}) implies that
\begin{equation}\label{eq:option3}
 \|A_T\hat{x}-A_Tx_0\|_2^2\leq\|A_T\hat{x}-A_Tx_0\|_2^2+\|A_{T^c}\hat{x}+A_{T^c}x_0\|_2^2\leq\epsilon^2.
\end{equation}
Combining (\ref{option1}) and (\ref{eq:option3}), we have
\begin{equation}\label{eq:set1}
 D^*\hat{x}\in \{D^*x\in \R^N: \|D^*x\|_1\leq \|D^*x_0\|_1, \|A_Tx-A_Tx_0\|_2\leq \epsilon\}.
\end{equation}
Recall that $A$ satisfies S-DRIP of order $tk$ with constants $\theta_-, \ \theta_+ \in (0:2)$. 
Here
$$
t\geq\max \{\frac{1}{2\theta_--\theta_-^2},\frac{1}{2\theta_+-\theta_+^2}\}>1.
$$
So $A_T$ satisfies DRIP of order $tk$ with
\begin{equation}\label{eq:set2}
 \delta_{tk}^{A_T}\leq\max\{1-\theta_-,\ \theta_+-1\}\leq \sqrt{\frac{t-1}{t}}.
\end{equation}
Combining (\ref{eq:set1}), (\ref{eq:set2}) and Lemma \ref{mainlemma}, we obtain
\begin{equation*}
 \|\hat{x}-x_0\|_2\leq c_1\epsilon+c_2\frac{2\sigma_k(D^*x_0)_1}{\sqrt{k}},
\end{equation*}
where $c_1$ and $c_2$ are defined as before in the Theorem \ref{maintheorem}.

If $|T^c|\geq\frac{m}{2}$, we can get the corresponding result
\begin{equation*}
\|\hat{x}+x_0\|_2\leq  c_1\epsilon+c_2\frac{2\sigma_k(D^*x_0)_1}{\sqrt{k}}.
\end{equation*}
Then we have proved the theorem.
\end{proof}
In problem (\ref{primemodel}), suppose $ x_0\in D\Sigma_k^N $ and $ D^*x_0\in\R^N $ decays rapidly.
From Theorem \ref{maintheorem} and Corollary \ref{gauss_sdrip}, we conclude that the  $ \ell_1 $-analysis (\ref{l1 dictionary model}) can provide a stable solution to problem (\ref{primemodel}) if we use  as many as $ \mathcal{O}(k \log (n/k)) $ Gaussian random measurements. 
\section{Acknowledgments}
My deepest gratitude goes to Professor Zhiqiang Xu, my academic supervisor, for his guidance and many useful discussions.
\section{Appendix}
The following two lemmas are useful in the proof of Lemma \ref{mainlemma}. 
\begin{lemma}\label{quotelemma1}
(Sparse Representation of a Polytope \cite{CZ14,XX13}):
Suppose $\alpha>0$ is a constant and $s>0$ is an integer.
Set
$$
T(\alpha,s):=\{v\in\mathbb{R}^n: \|v\|_\infty\leq\alpha,\ \|v\|_1\leq s\alpha\}.
$$
For any $v\in\mathbb{R}^n$, set
$$
\mathit{U}(\alpha,s,v):=\{u\in\mathbb{R}^n:\textup{supp}(u)\subseteq \textup{supp}(v),\|u\|_0\leq s,\|u\|_1=\|v\|_1,\|u\|_\infty\leq\alpha\}.
$$
Then $v\in T(\alpha,s)$ if and only if $ v $ is in the convex hull of $\mathit{U}(\alpha,s,v)$. In particular, any $v\in T(\alpha,s)$ can be expressed as
\begin{align*}
v=&\sum_{i=1}^{M}\lambda_iu_i\quad \text{and}\quad 0\leq\lambda_i\leq 1,\,   \sum_{i=1}^{M}\lambda_i=1,\\
& u_i\in \mathit{U}(\alpha,s,v).
\end{align*}
\end{lemma}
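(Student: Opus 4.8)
The plan is to establish the two directions of the equivalence separately. The inclusion ``any convex combination of points of $U(\alpha,s,v)$ lies in $T(\alpha,s)$'' is a one-line convexity estimate, so the real work is in the reverse inclusion, where I would exhibit $v$ as a convex combination of the \emph{vertices} of a suitable bounded polytope and then check that each such vertex belongs to $U(\alpha,s,v)$.

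For the easy direction, suppose $v=\sum_{i=1}^{M}\lambda_i u_i$ with $u_i\in U(\alpha,s,v)$, $\lambda_i\ge0$ and $\sum_i\lambda_i=1$. Then $\|v\|_\infty\le\sum_i\lambda_i\|u_i\|_\infty\le\alpha$, while from $\|u_i\|_0\le s$ and $\|u_i\|_\infty\le\alpha$ together with $\|u_i\|_1=\|v\|_1$ we get $\|v\|_1=\|u_i\|_1\le s\alpha$. Hence $v\in T(\alpha,s)$.

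For the reverse direction, the first step is a normalization: permuting coordinates and changing signs maps $T(\alpha,s)$ to itself and maps $U(\alpha,s,v)$ bijectively onto $U(\alpha,s,\tilde v)$ for the transformed vector $\tilde v$, so I may assume $v_1\ge\cdots\ge v_r>0=v_{r+1}=\cdots=v_n$ with $r=\|v\|_0$ (if $r\le s$ then $v\in U(\alpha,s,v)$ outright, so the interesting case is $r>s$). The second step is to introduce the bounded polytope
\[
Q:=\bigl\{\,u\in\mathbb{R}^n:\ \supp(u)\subseteq\supp(v),\ 0\le u_i\le\alpha\ \text{for all }i,\ \textstyle\sum_{i}u_i=\|v\|_1\,\bigr\}\subseteq[0,\alpha]^n ,
\]
and to note that $v\in Q$ since $\|v\|_\infty\le\alpha$. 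As a bounded polytope is the convex hull of its finitely many vertices, it then suffices to prove that every vertex $u$ of $Q$ lies in $U(\alpha,s,v)$; the properties $\supp(u)\subseteq\supp(v)$, $\|u\|_\infty\le\alpha$ and $\|u\|_1=\sum_i u_i=\|v\|_1$ already hold at every point of $Q$, so all that remains to check at a vertex is $\|u\|_0\le s$.

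This sparsity-of-vertices step is the part I expect to require the most care. Since $Q$ lies in the hyperplane $\sum_i u_i=\|v\|_1$ of the coordinate subspace indexed by $\supp(v)$, at a vertex at least $r-1$ of the box constraints $u_i=0$ or $u_i=\alpha$ must be active, and linear independence forces these to sit on $r-1$ distinct coordinates, so at most one coordinate of $u$ is strictly between $0$ and $\alpha$. If $j$ coordinates equal $\alpha$, then $j\alpha\le\sum_i u_i=\|v\|_1\le s\alpha$ gives $j\le s$, and \emph{here the integrality of $s$ is used}: if $j=s$ the mass $s\alpha$ is already spent, so every other coordinate vanishes and there is no interior coordinate, whereas if $j\le s-1$ there is at most one interior coordinate; in either case $\|u\|_0\le s$. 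Reversing the normalization shows every vertex of $Q$ lies in $U(\alpha,s,v)$, and expressing $v$ as a convex combination of the vertices of $Q$ yields the stated representation with $\sum_i\lambda_i=1$, $0\le\lambda_i\le1$ and $u_i\in U(\alpha,s,v)$.
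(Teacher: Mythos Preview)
The paper does not actually prove this lemma; it is quoted without proof from \cite{CZ14,XX13} as an auxiliary tool for the proof of Lemma~\ref{mainlemma}, so there is no in-paper argument to compare against.

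Your proof is correct. The reduction by coordinate permutation and sign change is harmless because every defining condition of $T(\alpha,s)$ and of $U(\alpha,s,v)$ is invariant under such maps. The polytope $Q$ is bounded (it sits in $[0,\alpha]^r$) and contains $v$, hence $v$ lies in the convex hull of the vertices of $Q$. Your active-constraint count is right: $Q$ lives in an $(r-1)$-dimensional affine slice, so a vertex must have $r-1$ linearly independent box constraints tight, and since the two box constraints on a single coordinate are either dependent or (as $\alpha>0$) jointly infeasible, these tight constraints occupy $r-1$ distinct coordinates, leaving at most one coordinate strictly in $(0,\alpha)$. The integrality of $s$ together with $\|v\|_1\le s\alpha$ then forces $\|u\|_0\le s$ exactly as you argue.

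For context, the proofs in the cited sources (notably \cite{CZ14}) construct the $u_i$ by an explicit greedy/iterative peeling procedure rather than by invoking vertex structure; your convex-geometry route is equally valid and makes the polytope interpretation more transparent, at the cost of appealing to the (standard) fact that a bounded polytope is the convex hull of its vertices.
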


\begin{lemma}\label{quotelemma2}(Lemma 5.3 in \cite{refCZ13}):
Suppose $m\geq r  $, $ a_1\geq a_2\geq\cdots\geq a_m\geq 0 $ and $ \sum_{i=1}^{r}a_i\geq\sum_{i=r+1}^{m}a_i $. Then for all $ \alpha\geq 1 $, we have
$$\sum_{j=r+1}^{m}a_j^\alpha\leq\sum_{i=1}^{r}a_i^\alpha. $$
\end{lemma}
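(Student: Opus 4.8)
The plan is to prove the inequality by introducing the pivot quantity $a_r^{\alpha-1}$, the $(\alpha-1)$-th power of the boundary element, and sandwiching the common sum $a_r^{\alpha-1}\sum_{i=1}^r a_i$ between the two sides. The case $\alpha=1$ is immediate, since the claim then reduces to the hypothesis $\sum_{j=r+1}^m a_j\leq\sum_{i=1}^r a_i$, so I would dispose of it first and assume $\alpha>1$ for the remainder, which makes the exponent $\alpha-1$ strictly positive and hence $t\mapsto t^{\alpha-1}$ nondecreasing on $[0,\infty)$.

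First I would bound the tail. For each index $j$ with $r+1\leq j\leq m$, monotonicity gives $a_j\leq a_r$, and since $\alpha-1\geq 0$ this yields $a_j^{\alpha-1}\leq a_r^{\alpha-1}$. Factoring $a_j^\alpha=a_j\cdot a_j^{\alpha-1}$, I obtain $a_j^\alpha\leq a_r^{\alpha-1}a_j$. Summing over the tail and invoking the hypothesis on the first moments gives
\[
\sum_{j=r+1}^m a_j^\alpha\leq a_r^{\alpha-1}\sum_{j=r+1}^m a_j\leq a_r^{\alpha-1}\sum_{i=1}^r a_i.
\]
Next I would bound the head from below by the same pivot. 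For each $i$ with $1\leq i\leq r$, monotonicity gives $a_i\geq a_r$, hence $a_i^{\alpha-1}\geq a_r^{\alpha-1}$, and factoring $a_i^\alpha=a_i\cdot a_i^{\alpha-1}$ yields $a_i^\alpha\geq a_r^{\alpha-1}a_i$. Summing over $i=1,\ldots,r$ gives
\[
a_r^{\alpha-1}\sum_{i=1}^r a_i\leq\sum_{i=1}^r a_i^\alpha.
\]
Chaining the two displays through the shared middle term $a_r^{\alpha-1}\sum_{i=1}^r a_i$ produces $\sum_{j=r+1}^m a_j^\alpha\leq\sum_{i=1}^r a_i^\alpha$, which is the assertion.

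There is no deep obstacle here; the content is entirely in choosing the right pivot and in the bookkeeping of the monotonicity directions, which point oppositely on the head and the tail and so conspire to produce the sandwich. The one point requiring a word of care is the degenerate case $a_r=0$: then non-increasingness forces $a_{r+1}=\cdots=a_m=0$, so the left-hand side vanishes and the inequality holds trivially, and one need not worry about interpreting $a_r^{\alpha-1}$ when the base is zero. Everything else is the elementary factorization $a^\alpha=a\cdot a^{\alpha-1}$ combined with the ordering of the sequence, so the argument is short and self-contained.
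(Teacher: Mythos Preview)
Your proof is correct. The pivot argument through $a_r^{\alpha-1}$ is exactly the right idea, the monotonicity bookkeeping is clean, and the degenerate case $a_r=0$ is properly addressed.

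As for comparison: the paper does not prove this lemma at all. It simply quotes it as Lemma~5.3 of Cai--Zhang \cite{refCZ13} and uses it as a black box in the proof of Lemma~\ref{mainlemma}. Your argument is in fact the standard one (and essentially the proof given in the cited reference), so there is nothing to contrast here beyond noting that you have supplied what the paper omits.
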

Now we are ready to prove Lemma \ref{mainlemma}.
\begin{proof}[Proof of the Lemma \ref{mainlemma}]
We assume that the tight frame $ D\in\R^{n\times N} $ is normalized, i.e., $ DD^*=I $ and $ \|y\|_2=\|D^*y\|_2 $ for all $ y\in\mathbb{R}^n $. For a subset $ T\subseteq\{1,2,\ldots,N\} $, we denote $ D_T $ as the matrix $ D $ restricted to the columns indexed by $ T $ (replacing other columns by zero vectors).

Set $h:=\hat{x}-x_0$. Let $T_0$ denote the index set of the largest $k$ coefficients of $D^*x_0$ in magnitude. Then
\begin{align*}
\|D^*x_0\|_1+\rho\geq\|D^*\hat{x}\|_1&=\|D^*x_0+D^*h\|_1\\&=\|D^*_{T_0}x_0+D^*_{T_0}h+D^*_{T_0^c}x_0+D^*_{T_0^c}h\|_1\\&\geq\|            D^*_{T_0}x_0\|_1-\|D^*_{T_0}h\|_1-\|D^*_{T_0^c}x_0\|_1+\|D^*_{T_0^c}h\|_1,
\end{align*}
which implies
\begin{align*}
\|D^*_{T_0^c}h\|_1&\leq\|D^*_{T_0}h\|_1+2\|D^*_{T_0^c}x_0\|_1+\rho\\
&=\|D^*_{T_0}h\|_1+2\sigma_k(D^*x_0)_1+\rho.
\end{align*}
Suppose $ S_0 $ is the index set of the $k$ largest entries in absolute value of $ D^*h $. We get
\begin{align*}
 \|D^*_{S_0^c}h\|_1\leq\|D^*_{T_0^c}h\|_1&\leq\|D^*_{T_0}h\|_1+2\sigma_k(D^*x_0)_1+\rho
\\ &\leq\|D^*_{S_0}h\|_1+2\sigma_k(D^*x_0)_1+\rho.
\end{align*}
Set
\[
     \alpha:=\frac{\|D^*_{S_0}h\|_1+2\sigma_k(D^*x_0)_1+\rho}{k}.
\]
We divide $ D^*_{S_0^c }h$ into two parts $ D^*_{S_0^c }h=h^{(1)}+h^{(2)} $, where
\begin{align*}
h^{(1)}:=D^*_{S_0^c}h\cdot I_{\{i:|D^*_{S_0^c}h(i)|>\alpha/(t-1)\}}, \quad
h^{(2)}:=D^*_{S_0^c}h\cdot I_{\{i:|D^*_{S_0^c}h(i)|\leq\alpha/(t-1)\}} .
\end{align*}
Then a simple observation is that
$ \|h^{(1)}\|_1\leq\|D^*_{S_0^c}h\|_1\leq\alpha k $.
Set
\[
      \ell := |\textup{supp} (h^{(1)})|=\|h^{(1)}\|_0 .
\]
Since all non-zero entries of $ h^{(1)} $ have magnitude larger than $ \alpha/(t-1) $, we have
\[
\alpha k\geq \|h^{(1)}\|_1=\sum_{i\in \textup{supp}(h^{(1)})}|h^{(1)}(i)|\geq\sum_{i\in \textup{supp}(h^{(1)})}\frac{\alpha}{t-1}=\ell\cdot\frac{\alpha}{t-1},
\]
which implies  $ \ell\leq (t-1)k $.

Note that
\begin{align*}
\|h^{(2)}\|_1&=\|D^*_{S_0^c}h\|_1-\|h^{(1)}\|_1\leq k\alpha-\ell \cdot \frac{ \alpha}{t-1}=(k(t-1)-\ell)\frac{\alpha}{t-1},\\
\|h^{(2)}\|_\infty&\leq \frac{\alpha}{t-1}.
\end{align*}
Then in Lemma \ref{quotelemma1}, by setting $ s:=k(t-1)-\ell $, we can express $ h^{(2)} $ as a weighted mean:
$$
  h^{(2)}=\sum_{i=1}^{M}\lambda_iu_i,
$$
where $ 0\leq \lambda_i\leq 1 $, $ \sum_{i=1}^{M}\lambda_i=1$, $ \|u_i\|_0\leq k(t-1)-\ell $, $\|u_i\|_\infty\leq\alpha/(t-1) $ and  $\textup{supp}(u_i)\subseteq \textup{supp}(h^{(2)}) $.
Thus
\begin{align*}
\|u_i\|_2\leq\sqrt{\|u_i\|_0}\cdot \|u_i\|_\infty &
=\sqrt{k(t-1)-\ell }\cdot \|u_i\|_\infty\\
& \leq\sqrt{k(t-1)}\cdot \|u_i\|_\infty\\
& \leq\alpha\sqrt{k/(t-1)}.
\end{align*}
Recall that  $\alpha=\frac{\|D^*_{S_0}h\|_1+2\sigma_k(D^*x_0)_1+\rho}{k}$.
Then
\begin{align}
\|u_i\|_2&\leq\alpha\sqrt{k/(t-1)} \nonumber\\
&\leq\frac{\|D^*_{S_0}h\|_2}{\sqrt{t-1}}+\frac{2\sigma_k(D^*x_0)_1+\rho}{\sqrt{k(t-1)}}\nonumber\\
&\leq\frac{\|D^*_{S_0}h+h^{(1)}\|_2}{\sqrt{t-1}}+\frac{2\sigma_k(D^*x_0)_1+\rho}{\sqrt{k(t-1)}}\nonumber\\
&=\frac{z+R}{\sqrt{t-1}},\label{opt-u}
\end{align}
where   $ z:=\|D^*_{S_0}h+h^{(1)}\|_2, \,\, R:=\frac{2\sigma_k(D^*x_0)_1+\rho}{\sqrt{k}}$.

Now we suppose $ 0\leq\mu\leq 1 $, $ d\geq 0$  are two constants to be determined.  Set
\[
   \beta_j:=D^*_{S_0}h+h^{(1)}+\mu\cdot u_j,\,\, j=1,\ldots,M.
\]
Then for any fixed $i\in[M]$,
\begin{align}\label{expression1}
\sum_{j=1}^{M}\lambda_j\beta_j-d\beta_i&=D^*_{S_0}h+h^{(1)}+\mu\cdot h^{(2)}-d\beta_i\\\nonumber
&=(1-\mu-d)(D^*_{S_0}h+h^{(1)})-d\mu u_i+\mu D^*h.
\end{align}
For $\sum_{i=1}^M\lambda_i=1$, we have the following identity
\begin{equation}\label{eq:lamdabeta2}
(2d-1)\sum_{1\leq i<j\leq M}\lambda_i\lambda_j\|AD(\beta_i-\beta_j)\|_2^2=
\sum_{i=1}^{M}\lambda_i\|AD(\sum_{j=1}^{M}\lambda_j\beta_j-d\beta_i)\|_2^2-\sum_{i=1}^{M}\lambda_i(1-d)^2\|AD\beta_i\|_2^2.
\end{equation}
In (\ref{expression1}), we chose $ d=1/2 $ and $ \mu=\sqrt{t(t-1)}-(t-1)< 1/2 $. Then (\ref{eq:lamdabeta2}) implies
\begin{align} 0&=\sum_{i=1}^{M}\lambda_i\|AD(\sum_{j=1}^{M}\lambda_j\beta_j-d\beta_i)\|_2^2-\sum_{i=1}^{M}\frac{\lambda_i}{4}\|AD\beta_i\|_2^2\nonumber \nonumber\\
&\overset{(\ref{expression1})}{=}\sum_{i=1}^{M}\lambda_i\|AD\left( (\frac{1}{2}-\mu)(D^*_{S_0}h+h^{(1)})-\frac{\mu}    {2}u_i+\mu D^*h\right) \|_2^2-\sum_{i=1}^{M}\frac{\lambda_i}{4}\|AD\beta_i\|_2^2\nonumber \\
&=\sum_{i=1}^{M}\lambda_i\|AD\left( (\frac{1}{2}-\mu)(D^*_{S_0}h+h^{(1)})-\frac{\mu}{2}u_i\right) \|_2^2\nonumber\\
&\qquad +2\left\langle AD\left( (\frac{1}{2}-\mu)(D^*_{S_0}h+h^{(1)})-\frac{\mu}{2}h^{(2)}\right), \mu ADD^*h\right\rangle
+\mu^2\|ADD^*h\|_2^2-\sum_{i=1}^{M}\frac{\lambda_i}{4}\|AD\beta_i\|_2^2 \nonumber\\
&=\sum_{i=1}^{M}\lambda_i\|AD\left( (\frac{1}{2}-\mu)(D^*_{S_0}h+h^{(1)})-\frac{\mu}{2}u_i\right) \|_2^2 \label{eq:threeterm}\\
&\qquad +\mu(1-\mu)\left\langle
AD(D^*_{S_0}h+h^{(1)}),ADD^*h\right\rangle -\sum_{i=1}^{M}\frac{\lambda_i}{4}\|AD\beta_i\|_2^2.\nonumber
\end{align}
We next estimate the three terms in (\ref{eq:threeterm}).
First we give the following useful relation:
\begin{align}
\left\langle D(D^*_{S_0}h+h^{(1)}), Dh^{(2)}\right\rangle
&=\left\langle D(D^*_{S_0}h+h^{(1)}), D(D^*h-D^*_{S_0}h-h^{(1)})\right\rangle \nonumber\\
&=\left\langle D(D^*_{S_0}h+h^{(1)}), h\right\rangle-\left\langle D(D^*_{S_0}h+h^{(1)}),D(D^*_{S_0}h+h^{(1)})\right\rangle\nonumber\\
&=\left\langle D^*_{S_0}h+h^{(1)}, D^*h\right\rangle-\|D(D^*_{S_0}h+h^{(1)})\|_2^2\nonumber\\
&=\|D^*_{S_0}h+h^{(1)}\|_2-\|D(D^*_{S_0}h+h^{(1)})\|_2^2.\label{usefulrelation}
\end{align}
Noting that $\| D^*_{S_0}h\|_0\leq k $, $ \|h^{(1)}\|_0= \ell\leq(t-1)k $ and  $\|u_i\|_0\leq s =k(t-1)-\ell$, we obtain
\[
\|D^*_{S_0}h+h^{(1)}\|_0\leq\ell+k\leq t\cdot k,\quad \| \beta_i\|_0\leq  \| D^*_{S_0}h\|_0 +  \|h^{(1)}\|_0+ \|u_i\|_0\leq t\cdot k,
\]
and
$$
\|(\frac{1}{2}-\mu)(D^*_{S_0}h+h^{(1)})-\frac{\mu}{2}u_i\|_0\leq t\cdot k.
$$
Here we assume $ t\cdot k $ as an integer first. Since $A$ satisfies the DRIP of order $t\cdot k$ with constant $\delta $, we can obtain
\begin{align*}
&\quad \sum_{i=1}^{M}\lambda_i \|AD\left( (\frac{1}{2}-\mu)(D^*_{S_0}h+h^{(1)})-\frac{\mu}{2}u_i\right) \|_2^2\\
&\leq\sum_{i=1}^{M}\lambda_i(1+\delta)\|D\left((\frac{1}{2}-\mu)(D^*_{S_0}h+h^{(1)})-\frac{\mu}{2}u_i\right)\|_2^2\\
&=(1+\delta)\left((\frac{1}{2}-\mu)^2 \|D(D^*_{S_0}h+h^{(1)})\|_2^2+\frac{\mu^2}{4}\sum_{i=1}^{M}\lambda_i\|Du_i\|_2^2-\mu(\frac{1}{2}-\mu)\left\langle D(D^*_{S_0}h+h^{(1)}), Dh^{(2)}\right\rangle\right)\\
&\overset{(\ref{usefulrelation})}{=}(1+\delta)\left(\frac{1}{2}(\frac{1}{2}-\mu) \|D(D^*_{S_0}h+h^{(1)})\|_2^2+\frac{\mu^2}{4}\sum_{i=1}^{M}\lambda_i\|Du_i\|_2^2-\mu(\frac{1}{2}-\mu) \|D^*_{S_0}h+h^{(1)}\|_2^2\right),
\end{align*}
\begin{equation*}
\begin{split}
\left\langle AD(D^*_{S_0}h+h^{(1)}), ADD^*h\right\rangle
&=\left\langle AD(D^*_{S_0}h+h^{(1)}), Ah\right\rangle\\
&\leq\sqrt{1+\delta}\cdot \|D(D^*_{S_0}h+h^{(1)})\|_2\cdot\epsilon
\end{split}
\end{equation*}
and
\begin{align*}
&\quad \sum_{i=1}^{M}\lambda_i\|AD\beta_i\|_2^2\\
&=\sum_{i=1}^{M}\lambda_i\|AD(D^*_{S_0}h+h^{(1)}+\mu\cdot u_i)\|_2^2\\
&\geq(1-\delta)\sum_{i=1}^{M}\lambda_i\|D(D^*_{S_0}h+h^{(1)}+\mu\cdot u_i)\|_2^2\\
&=(1-\delta)\left( \|D(D^*_{S_0}h+h^{(1)})\|_2^2+\mu^2\sum_{i=1}^{M}\lambda_i\|Du_i\|_2^2+2\mu\left\langle D(D^*_{S_0}h+h^{(1)}), Dh^{(2)}\right\rangle \right)\\
&\overset{(\ref{usefulrelation})}{=}(1-\delta)\left( (1-2\mu)\|D(D^*_{S_0}h+h^{(1)})\|_2^2+\mu^2\sum_{i=1}^{M}\lambda_i\|Du_i\|_2^2+2\mu\|D^*_{S_0}h+h^{(1)}\|_2^2 \right).
\end{align*}
Combining the above results with (\ref{opt-u}) and (\ref{eq:threeterm}), we get
\vspace{-2em}
\begin{align*}
0&\leq\frac{1}{2}(1+\delta)(\frac{1}{2}-\mu) \|D(D^*_{S_0}h+h^{(1)})\|_2^2+\frac{1+\delta}{4}\mu^2\sum_{i=1}^{M}\lambda_i\|Du_i\|_2^2-(1+\delta)\mu(\frac{1}{2}-\mu) \|D^*_{S_0}h+h^{(1)}\|_2^2 \\
&\quad+\mu(1-\mu)\sqrt{1+\delta}\|D(D^*_{S_0}h+h^{(1)})\|_2\cdot\epsilon\\
&\quad-\frac{1}{4}(1-\delta)(1-2\mu)\|D(D^*_{S_0}h+h^{(1)})\|_2^2-\frac{1-\delta}{4}\mu^2\sum_{i=1}^{M}\lambda_i\|Du_i\|_2^2-\frac{1-\delta}{2}\mu\|D^*_{S_0}h+h^{(1)}\|_2^2 \\
&=\delta(\frac{1}{2}-\mu)\|D(D^*_{S_0}h+h^{(1)})\|_2^2+(\mu^2(1+\delta)-\mu)\|D^*_{S_0}h+h^{(1)}\|_2^2+\frac{\delta}{2}\mu^2\sum_{i=1}^{M}\lambda_i\|Du_i\|_2^2\\
&\quad+\mu(1-\mu)\sqrt{1+\delta}\|D(D^*_{S_0}h+h^{(1)})\|_2\cdot\epsilon\\
&\leq(\delta(\frac{1}{2}-\mu)+\mu^2(1+\delta)-\mu)z^2+\frac{\delta}{2}\mu^2\sum_{i=1}^{M}\lambda_i\|u_i\|_2^2
+\mu(1-\mu)\sqrt{1+\delta}\cdot z\cdot\epsilon\\
&\overset{(\ref{opt-u})}{\leq} \left((1+\delta)(\frac{1}{2}-\mu)^2-\frac{1-\delta}{4} \right)z^2+\frac{\delta}{2}\mu^2\frac{(z+R)^2}{t-1} +\mu(1-\mu)\sqrt{1+\delta}\cdot z\cdot\epsilon \\
&=\left((\mu^2-\mu)+\delta\left( \frac{1}{2}-\mu+(1+\frac{1}{2(t-1)})\mu^2\right)  \right)z^2+\left( \mu(1-\mu)\sqrt{1+\delta}\cdot\epsilon+\frac{\delta\mu^2R}{t-1}\right)z+\frac{\delta\mu^2R^2}{2(t-1)} \\
&=-t\left((2t-1)-2\sqrt{t(t-1)} \right) (\sqrt{\frac{t-1}{t}}-\delta)z^2+\left( \mu^2\sqrt{\frac{t}{t-1}}\sqrt{1+\delta}\cdot\epsilon+\frac{\delta\mu^2R}{t-1}\right)z+\frac{\delta\mu^2R^2}{2(t-1)}\\
&=\frac{\mu^2}{t-1}\left(-t(\sqrt{\frac{t-1}{t}}-\delta)z^2+(\sqrt{t(t-1)(1+\delta)}\epsilon+\delta R)z+\frac{\delta R^2}{2} \right),
\end{align*}
which is a quadratic inequality for $z$. Recall that $ \delta<\sqrt{(t-1)/t} $. So by solving the above inequality we get
\begin{align*}
z&\leq\frac{(\sqrt{t(t-1)(1+\delta)}\epsilon+\delta R)+\left((\sqrt{t(t-1)(1+\delta)}\epsilon+\delta R)^2+2t(\sqrt{(t-1)/t}-\delta)\delta R^2 \right)^{1/2}  }
{2t(\sqrt{(t-1/t)}-\delta) }\\
&\leq\frac{\sqrt{t(t-1)(1+\delta)}}{t(\sqrt{(t-1)/t}-\delta)}\epsilon+\frac{2\delta+\sqrt{2t(\sqrt{(t-1)/t}-\delta)\delta}}{2t(\sqrt{(t-1)/t}-\delta)}R.
\end{align*}
We know  $ \|D^*_{S_0^c}h\|_1\leq\|D^*_{S_0}h\|_1+R\sqrt{k} $. In the Lemma \ref{quotelemma2}, if we set $ m=N $, $ r=k $, $ \lambda=R\sqrt{k}\geq 0 $ and $ \alpha=2 $, 
we can obtain
$$
\|D^*_{S_0^c}h\|_2\leq\|D^*_{S_0}h\|_2+R.
$$
So
\begin{align*}
\|h\|_2&=\|D^*h\|_2\\
&=\sqrt{\|D^*_{S_0}h\|_2^2+\|D^*_{S_0^c}h\|_2^2}\\
&\leq\sqrt{\|D^*_{S_0}h\|_2^2+(\|D^*_{S_0}h\|_2+R)^2}\\ &\leq\sqrt{2\|D^*_{S_0}h\|_2^2}+R\leq\sqrt{2}z+R\\
&\leq\frac{\sqrt{2(1+\delta)}}{1-\sqrt{t/(t-1)}\delta}\epsilon+\left( \frac{\sqrt{2}\delta+\sqrt{t(\sqrt{(t-1)/t}-\delta)\delta}}{t(\sqrt{(t-1)/t}-\delta)}+1 \right) R.
\end{align*}
Substituting $ R $ into this inequality, we can get the conclusion.
For the case where $t\cdot k$ is not an integer, we set $t^*:=\lceil tk\rceil / k$, then $t^*>t$ and $\delta_{t^*k}=\delta_{tk}<\sqrt{\frac{t-1}{t}}<\sqrt{\frac{t^*-1}{t^*}}$. We can prove the result by working on $\delta_{t^*k}$.
\end{proof}

\end{document}